\pdfoutput=1
\documentclass[paper=letter, fontsize=10pt,leqno]{scrartcl}
\usepackage[body={5.35in,7.5in}]{geometry} 
\usepackage[english]{babel}				
\usepackage{amsmath}				
\usepackage{amsfonts}
\usepackage{amsthm}
\usepackage{hyperref}
 \usepackage{verbatim}
\usepackage{graphicx}				
\usepackage{url}
\usepackage{amssymb}
\usepackage{bbm}
\usepackage{amscd}
\usepackage{calrsfs}
\usepackage[format=plain]{caption}
\usepackage{wrapfig}
 \usepackage{float}
\usepackage{sectsty}					
\allsectionsfont{\centering \normalfont\scshape}	
\usepackage{mathtools}
\DeclareCaptionStyle{ruled}{labelfont=bf,labelsep=space}
\usepackage[]{forest}

\newtheorem{theorem}{Theorem}

\newtheorem{definition}[theorem]{Definition}
\newtheorem{proposition}[theorem]{Proposition}

\newtheorem*{proposition*}{Proposition}

\title{
		\vspace{-1in} 	
		\normalfont \normalsize \textsc{} \\ [25pt]
		\huge  Nonholonomic billiards and bounded motion in cylinders
}

\author{\normalfont \large 
Christopher  Cox\footnote{\scriptsize Department of Mathematics and Statistics, University of Massachusetts Amherst, 710 N. Pleasant Street, Amherst, MA 01003},
\ Renato Feres\footnote{\scriptsize Department of Mathematics, Washington University, Campus Box 1146, St. Louis, MO 63130},
\ Zijie Hu\footnotemark[2]
}

\begin{document}

\maketitle

\begin{abstract}
\begin{center}
 Abstract \end{center}
{\small  
A widely used mathematical model for the bouncing motion of an ideally elastic ball\----referred to in  previous work by the first two authors and collaborators as a  {\em no-slip billiard} system\----exhibits some notable
dynamical behavior that is not well-understood. For example, under certain initial conditions,  the  axial component of the position of the  center of the ball moving inside a vertical solid cylinder under constant gravitational force does not  accelerate downward as might be expected but remains bounded. There is not  as yet, as far as we know, any  analytical study of
the bouncing ball dynamics, under gravity, in general cylinders (not necessarily having a circular  cross-section) in $\mathbb{R}^3$. In this paper, we propose an approach
 by comparing the no-slip system 
with a smooth approximation of it that we call {\em nonholonomic billiards}. It consists of a $4$-dimensional ball
rolling on the solid $3$-dimensional cylinder.  
 We first review earlier work on no-slip billiards and
their connection with nonholonomic (rolling) systems, explain how nonholonomic billiards approximate the no-slip kind (after work by the first two authors and
B. Zhao), 
and illustrate the relationship with a few numerical case studies that demonstrate the utility   of the soft (nonholonomic) system as a helpful tool for exploring the dynamics of  no-slip billiard systems.   
}
\end{abstract}

\section{Introduction}
The following geometric-mechanical problem is one of the motivations for  the present paper.  A perfectly elastic ball   is thrown against the inner surface of a cylindrical wall. The subsequent motion is under
 the influence of constant gravitational force directed downward along the cylinder axis. The ball then undergoes a sequence of bounces, with parabolic (falling) motion in between.
  We imagine the surface of the rigid, spherical ball as ideally rubbery, which causes the contact with the wall's inner surface to be perfectly nonslippery, in a sense to be shortly defined. 
  This no-slip property imposes at each collision a coupling of the ball's rotational and  linear momentum. 
  The cylinder need not have  circular cross-section; it encloses a region $\mathcal{R}=\mathcal{C}\times \mathbb{R}\subseteq \mathbb{R}^3$, where $\mathcal{C}\subseteq \mathbb{R}^2$. For example, when $\mathcal{C}$ is a strip in $\mathbb{R}^2$ bounded by two parallel lines, $\mathcal{R}$ is the region between two parallel, vertical walls.
  \begin{figure}[htbp]
\begin{center}
\includegraphics[width=2.5in]{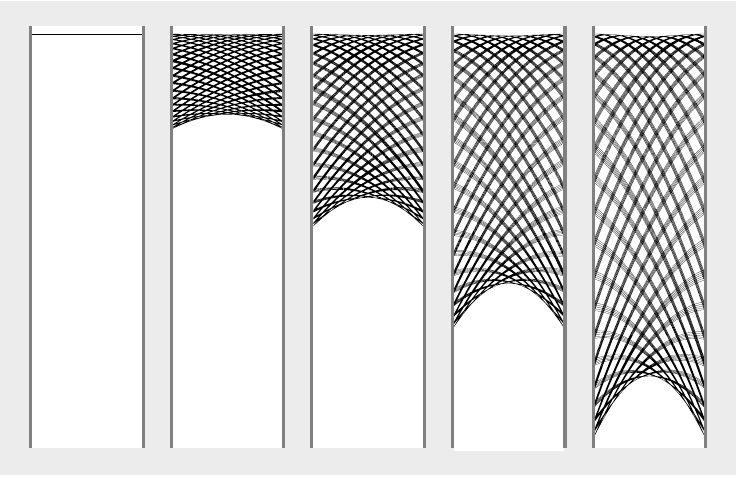}\ \ 
\caption{{\small  No-slip disc bouncing between two parallel vertical lines, under gravity, with increasing values of the acceleration-due-to-gravity parameter $g$ increasing from left to right. On the far left, $g=0$ and the trajectory is periodic of period $2$. }}
\label{plates}
\end{center}
\end{figure}    

  The question we ask is then:
     {what can be said about the ball's vertical motion?} In particular, can the  motion remain bounded so that  the ball does not fall below a certain height?
     In this section we review a few pertinent results from a couple of   papers by the first two authors together with B. Zhao \cite{CFZ21}, and T. Chumley and S. Cook \cite{CCCF20}. It turns out that trajectories of the no-slip billiard system under gravity are bounded in the case of parallel vertical walls (in any dimension); see
    Figure \ref{plates}, which shows trajectories of the center of the ball in dimension $2$ under increasing values of the acceleration due to gravity.  
     
    \begin{figure}[htbp]
\begin{center}
\includegraphics[width=5.0in]{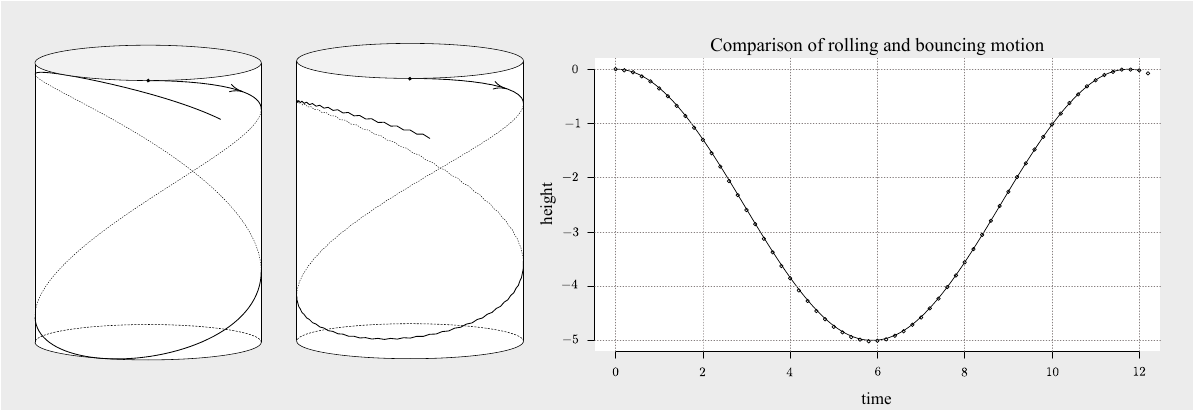}\ \ 
\caption{{\small Left:  segment of trajectory of the center of a ball  rolling on the inner surface of  a cylinder, under gravity, showing harmonic vertical motion. Center:   trajectory of a no-slip billiard ball, with very short bouncing steps, not satisfying the initial rolling impact assumption. Right: under that assumption, the no-slip billiard ball closely tracks the rolling motion. }}
\label{cylinder}
\end{center}
\end{figure}

The motion is also bounded in $3$-dimensional  circular cylinders   under a natural  assumption on   initial conditions  that we call {\em rolling impact}.  This case is particularly interesting as it highlights the similarities with the classical and well-known mechanical  system consisting of a ball rolling without  slipping on the inner wall of  a cylinder, under gravity, for which the vertical component of the motion oscillates harmonically. See \cite{CCCF20} and  the left image in the below Figure \ref{cylinder}.  The  initial condition  on the linear and angular velocities  ensuring  bounded motion is this: the velocity of  the point on the ball in contact  with the cylindrical  surface at  the time of first collision should have zero component along the tangent line to the circular cross section. In other words, this initial velocity (which, we stress, is not the center velocity but the velocity of a boundary point on the ball) should lie in the vertical plane containing the cylinder axis
and   the point of contact. We say in this case that the initial  (linear and angular) velocities satisfy a {\em rolling impact} condition.

The vertical component of the center of the  ball bouncing inside  the circular cylinder is  shown as a function of time by the graph on the right-hand side of 
 Figure \ref{cylinder}. There it is assumed that the first collision satisfies  rolling impact.   This generates a sequence of closely spaced bounces (think of a skipping stone) that
 precisely tracks the rolling motion.

    \begin{figure}[htbp]
\begin{center}
\includegraphics[width=4 in]{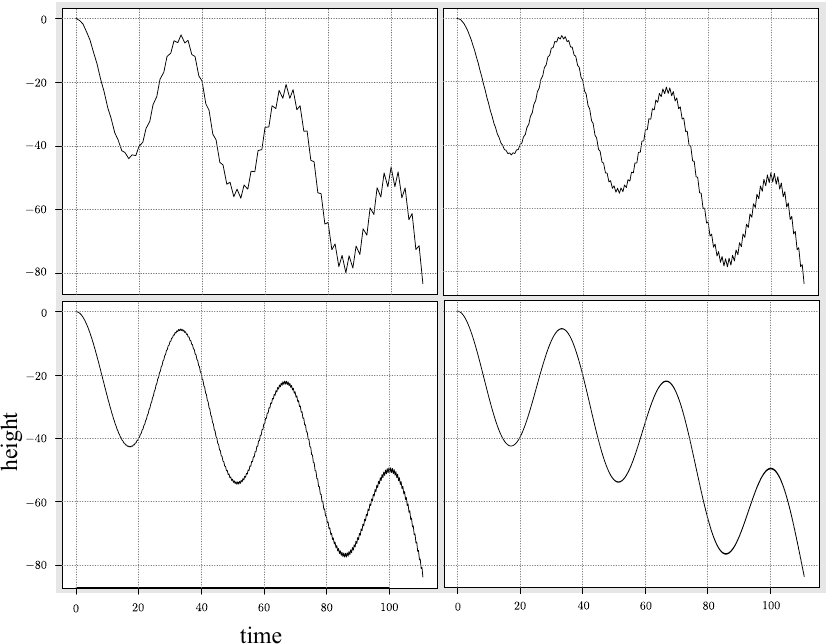}\ \ 
\caption{\small{Height of the ball's center as a  function of time when the rolling first impact assumption is not satisfied. The sequence of graphs (NW $\rightarrow$ NE $\rightarrow$ SW $\rightarrow$ SE) was generated with decreasing   lengths of the skipping motion steps. An apparent energy dissipation into the  zig-zag motion at smaller and smaller scales produces accelerated falling and  decreasing amplitude of oscillation.}} 
\label{falling_down}
\end{center}
\end{figure}   
    
    Boundedness of trajectories was noted in the two dimensional strip with gravity in 
  \cite{Hefner2004}  and then later proved rigorously  for more general cases in   \cite{CCCF20}, but neither argument is as illuminating as may be wished.  For example, the proof does not 
 relate the bouncing  motion to  rolling   and so  does not explain why  the former can so closely shadow the latter as shown in Figure \ref{cylinder}. 
In particular, one can expect  the skipping motion illustrated by the graph of Figure \ref{cylinder} to converge to a solution to the rolling differential equations (to be given later). Obtaining this limit is so far, to us,   a challenging problem. Part of the subtlety  is already apparent in the middle image of Figure \ref{cylinder}. When the rolling impact condition is not satisfied, the bouncing motion  with short bouncing steps exhibits  a zig-zag small scale component 
that increases in amplitude with time. In the limit of bouncing steps approaching $0$, it is natural to conjecture that some sort of rolling trajectory is obtained which, however, incorporates   energy dissipation through this infinitesimal zig-zag   motion, causing the ball's descent to accelerate and the amplitude of oscillation to decrease, as illustrated in Figure \ref{falling_down}. Obtaining the equations for such  dissipative rolling motion is an interesting problem that is not addressed here.
 
\begin{figure}[htbp]
\begin{center}
\includegraphics[width=3.5 in]{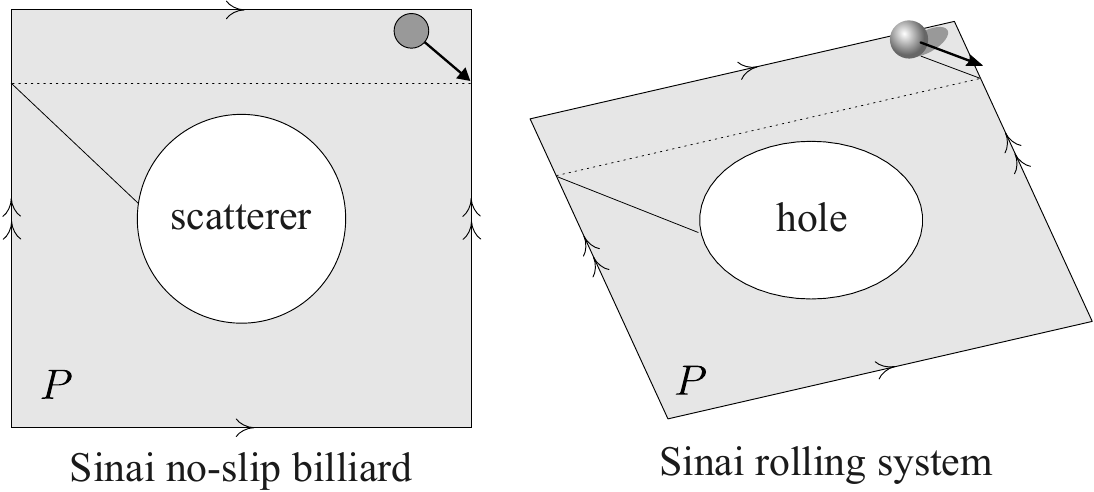}\ \ 
\caption{\small{Left: a two-dimensional no-slip billiard system on a Sinai billiard table. Right: the associated $3$-dimensional rolling system. With appropriately adjusted moment of inertia parameters for the disc and the ball, the rolling-around-the-edge of the circular hole,  on the right, converges to the no-slip billiard reflection off the circular scatterer, on the left,  as the radius of the  ball approaches $0$. This suggests that the two systems have comparable dynamical properties, and that the rolling system, governed by differential equations, can help  us to understand the other\----impact driven\----system. The one  on the left may be regarded  as the two-dimensional shadow of that on the right, while the one on the right defines a soft-collisions version of the other.}} 
\label{Sinai_compare}
\end{center}
\end{figure} 
     
A promising  approach to a detailed analysis of  these no-slip billiard systems is suggested by \cite{CFZ21}, which establishes a much more direct connection between no-slip billiards and nonholonomic  systems. Stated somewhat vaguely for now,  a no-slip billiard system in an $n$-dimensional submanifold $\mathcal{R}\subseteq\mathbb{R}^n$ with boundary can be approximated, for small values of the ball radius, by the  rolling of an $n+1$-dimensional ball on $\mathcal{R}\subseteq \mathbb{R}^{n+1}$, with appropriately set moment of inertia parameter. See Figure \ref{Sinai_compare}. (Away from the boundary edges and
in the absence of gravity,   the motion of the center of the rolling ball  is uniform and rectilinear.)  In this way, the study of a billiard-like system driven by discrete impulsive forces can, in principle, be approached via the analysis of a nonholonomic system governed by differential equations, at the cost of passing from dimension $n$ to dimension $n+1$. We call the latter class of systems  {\em nonholonomic billiards}.  This
 term has been previously    used in \cite{BKM}, which was the inspiration for \cite{CFZ21}. In the former paper, however,  it has  a different definition in  a much more
 restricted setting. The two types of nonholonomic  billiards are, nevertheless,  sufficiently close conceptually   that using   the same name seems  to us appropriate.

Our main goal in this paper, in addition to reviewing a few of the results mentioned above, is to explore, by a combination of analytic and numerical methods, some examples of such nonholonomic billiard systems in cylinders under gravity and indicate the similarities with their no-slip counterparts.

\section{No-slip billiard systems}
\subsection{Generalities}\label{generalities}
A few definitions are needed to make precise the above description of the {\em no-slip} bouncing ball,  referred to here as a  {\em no-slip billiard system}. 
Let the ball of radius $r$ centered at the origin of $\mathbb{R}^n$ be  $B:=B_0(r)$ and define a mass distribution on $B$  by a rotationally invariant finite Borel measure $\mu$ on $B$ of total mass $m$. The mass-normalized  matrix $L=(\ell_{ij})$ of second moments of $\mu$ is defined by
$$\ell_{ij}:=\frac1m \int_{B} x_i x_j\ d\mu(x),$$
where $x=(x_1, \dots, x_n)$ are the standard Cartesian coordinates in $\mathbb{R}^n$. Rotational symmetry implies that $L$ is a scalar matrix, $L=\lambda I$, where
$I$ is the identity matrix in dimension $n$. It will be useful to express the  moment of inertia parameter $\lambda$ in a couple of different forms. We first define the dimensionless parameter
$$\gamma:=\sqrt{2\lambda}/r. $$ 
For example, $\gamma=\sqrt{\frac{2}{n+2}}$ for a uniform mass distribution in dimension $n$. Let   $\mathcal{R}_0$ be a region in $\mathbb{R}^n$
 and define the {\em billiard domain} $\mathcal{R}$ as the closure of  the set of $x\in \mathbb{R}^n$ for which the open ball $B_x(r)$ of center $x$ and radius $r$
 is contained in $\mathcal{R}_0$. We assume that $\mathcal{R}_0$ is sufficiently regular and $r$ sufficiently small   that $\mathcal{R}$ has  piecewise smooth boundary, possibly with corners. On a regular point $a$ on the boundary of $\mathcal{R}$ (that is, a point at which the tangent space to the boundary of $\mathcal{R}$ exists) define the unit normal vector $\nu(a)=\nu_a$ pointing towards the interior of $\mathcal{R}$.  Denote by $SE(n)$ the special Euclidean group of positive isometries of $\mathbb{R}^n$, whose elements are pairs $(A,a)\in SO(n)\times \mathbb{R}^n$ representing the affine transformation
 $x\mapsto Ax+a$. The configuration manifold of the ball-in-$\mathcal{R}_0$ system is $M\subseteq SE(n)$ consisting of all $(A,a)\in SE(n)$ such that $a\in \mathcal{R}$. 
 Thus $M$ is a submanifold of $SE(n)$ with boundary and possibly corners, trivially fibered over $\mathcal{R}$, with typical fiber given by 
  the rotation group $SO(n)$. Elements in the Lie algebra of $SE(n)$ will be written $(U,u)$ where $U\in \mathfrak{so}(n)$\----the {\em angular velocity}\----is a skew-symmetric $n\times n$ matrix and $u\in \mathbb{R}^n$ is the velocity of the center of the ball. 
 
 From the  mass distribution measure $\mu$  we obtain the kinetic energy Riemannian metric in $M$ as follows. Tangent vectors  $\xi, \eta$  in  $T_{q}M$ at  configuration $q=(A,a)$ may be written
 as $\xi=(U_\xi A, u_\xi)$
with $(U_\xi, u_\xi)\in \mathfrak{so}(n)\times \mathbb{R}^n$.
The kinetic energy of the system in configuration $q$ and kinetic state $\xi$ is obtained by integrating over $B$ the kinetic energy  $$\frac12 |V_\xi(x)|^2\, d\mu(x)$$ of a mass element $d\mu(x)$,  where $V_\xi(x)\in \mathbb{R}^n$ is the velocity vector of the material point $x$ in the   kinetic state $\xi$.
The  inner product on $T_{q}M$ associated to the kinetic energy  quadratic form  is easily obtained:
$$\langle \xi,\zeta\rangle_q := m\left\{\frac{(r\gamma)^2}{2}\text{Tr}\left(U_\xi U_\zeta^\intercal\right) + u_\xi\cdot u_\zeta\right\}. $$

The tangent bundle to the boundary of $M$ contains a  vector subbundle $\mathfrak{R}$, which we call the {\em rolling} subbundle, needed for the definition of  the no-slip condition.   At a regular boundary point $q=(A,a)$ of $M$ (so that $a$ lies in the regular boundary of $\mathcal{R}$) the rolling subspace of $T_qM$ is
$$ \mathfrak{R}_q:=\left\{(UA, u )\in T_qM: u=rU\nu_a\right\}.$$
Note that  the point on the boundary of the ball    in contact with the boundary of $\mathcal{R}_0$ at a collision configuration has zero velocity in the kinetic state $(UA,u)\in \mathfrak{R}_q$. This is implied by the equation $u=rU\nu_a$. 
\subsection{The no-slip collision map}
The ball's kinetic state  immediately after a collision,  $C_q \xi$, is a function of  the state $\xi$  immediately before. This {\em collision map} $C_q:T_qM\rightarrow T_qM$ is assumed to have  the following properties: (1) it is a linear, (2) preserves kinetic energy, (3) satisfies time reversibility, (4) it restricts to the identity on
 the rolling space $\mathfrak{R}_q$. This last assumption  is justified by noting  that, at the moment of collision, 
the point on the moving ball in contact with the boundary of $\mathcal{R}_0$ has zero velocity for a kinetic state  in $\mathfrak{R}_q$, and thus no change 
 in  linear or angular momentum should occur. Such $\xi\in \mathfrak{R}_q$ physically corresponds to a rolling and grazing collision. We refer to \cite{CF}, by the first two authors, for
more details on the classification of such collision maps. It is interesting to observe, as noted in \cite{CF}, that condition (4)  follows from the physically natural requirement that
  impulse forces causing the ball to bounce off the wall hypersurface  must act only upon the point of contact.  (In principle, one may conceive of more general 
impulse force fields  with an extended range of action, such as would happen when shape deformation during  a collision event is also modeled.)  
These four  assumptions characterize $C_q$ as an orthogonal linear involution that restricts to the identity map on the rolling subspace. 

 If we further assume isotropic  ball surface properties (effectively, the map $C_q=C_a$ does not depend on  $A$), then it is not difficult  to
conclude (see \cite{CF}) that there are only two possibilities for $C_q$, determined by whether this map's restriction to $\mathfrak{R}_q^\perp$\----the orthogonal complement to the rolling subspace\----is the identity or its negative: in the first case, which we call {\em slippery} or {\em standard} collision map (the term {\em smooth} is
widely used in the physics and engineering  literature), the velocity of the center of the ball reflects specularly and
angular momentum is unchanged; ignoring angular momentum and tracking the motion by the position of the ball's center, one obtains  ordinary billiard dynamics.   The second possibility, for which $C_q|_{\mathfrak{R}_q^\perp}$ is the negative of the identity map, corresponds to
what we call {\em no-slip} collision. 

 The explicit form of $C_q=C_a$ acting on $\xi=(UA, u)\cong (U,u)\in \mathfrak{se}(n)$ (the latter being the Lie algebra of $SE(n)$) is
 \begin{equation}\label{collision_map}C_a(U,u)=\left(U+ \frac{s_\beta}{\gamma r} \nu_a\wedge\left[ u -rU\nu_a\right],c_\beta u -\frac{s_\beta}{\gamma}(u\cdot \nu_a)\nu_a + s_\beta \gamma r U\nu_a\right),\end{equation}
 where   
 $$c_\beta:=\cos\beta :=\frac{1-\gamma^2}{1+\gamma^2}, \ \ s_\beta:=\sin\beta :=\frac{2\gamma}{1+\gamma^2}, $$
 $u \cdot v$ is the ordinary dot product,
 and $\wedge: \mathbb{R}^n\times \mathbb{R}^n\rightarrow \mathfrak{so}(n)$ is the generalized cross product of vectors: if $u, v\in \mathbb{R}^n$ then
 $u\wedge v$ is the skew-symmetric matrix such that
 $$(u\wedge v)w:= (u\cdot  w)v -(v\cdot w)u. $$

 The no-slip  assumption is typically used as  model for   energy preserving impacts involving rigid  spherical bodies when coupling of linear and angular momentum  is expected at collisions. Examples are given in \cite{ gutkin,CFII,CFZ,Garwin,gualtieri,MLL,W}. 

A more revealing  form  of Equation (\ref{collision_map}) is obtained with the following  minor change in notation. Let $V_a$
be the tangent space to the boundary of $\mathcal{R}$ at a regular boundary point $a$ and denote by  $\Pi_a$  the orthogonal projection 
from $\mathbb{R}^n$ to $V_a$.  Then
$$T_a\mathcal{R}=V_a\oplus R\nu_a$$
and,
as shown in \cite{CFZ21}, any $U\in \mathfrak{so}(n)$ has the decomposition
$$U=\Pi_aU\Pi_a +\nu_a \wedge U\nu_a,$$
where $\Pi_a U \Pi_a\in \mathfrak{so}(V_a)$ is    an infinitesimal rotation on $V_a :=T_a(\partial \mathcal{R})$. Now
define
 $$\overline{u}:=\Pi_a u,\ \   S:=\gamma r U, \ \   \overline{S}:=\Pi_a S\Pi_a,  \ \ 
 W:= W_S:=S\nu_a.$$
 Both $\overline{u}$ and   $W$ are tangent vectors in $V_a$.
 Let us use the same symbol $C_a$ to represent the collision map in the new variables $(S,u)$. Then
 Equation (\ref{collision_map}) becomes
\begin{equation}\label{collision_S}C_a(S,u)= \left(\overline{S}, 0\right) +(0, -u\cdot\nu_a \nu_a) + \left(\nu_a\wedge\left(s_\beta \overline{u} - c_\beta W\right),  c_\beta \overline{u}+s_\beta W\right). \end{equation}
 Separating components, note that
  \begin{equation}\label{collision_map_bis}
 \overline{S} \mapsto   \overline{S}, \ \ \nu_a\mapsto -\nu_a, \ \ \left(\begin{array}{c}\overline{u} \\W\end{array}\right) \mapsto \left(\begin{array}{cc}c_\beta  & \ \ s_\beta  \\s_\beta  & -c_\beta \end{array}\right)\left(\begin{array}{c}\overline{u} \\W\end{array}\right). 
  \end{equation}
  For simplicity, we have omitted  the $(n-1)\times (n-1)$ identity matrix (more precisely, the identity map on $V_a$) multiplying each $s_\beta$, $c_\beta$ in the above block matrix. Since $c_\beta^2+s_\beta^2=1$, the map
  $$ \left(\begin{array}{c}\overline{u} \\W\end{array}\right) \mapsto \left(\begin{array}{cc}c_\beta  & \ \ s_\beta  \\s_\beta  & -c_\beta \end{array}\right)\left(\begin{array}{c}\overline{u} \\W\end{array}\right)=
 \left(\begin{array}{cc}c_\beta  & -s_\beta  \\s_\beta  & \ \ c_\beta \end{array}\right)\left(\begin{array}{c}\overline{u} \\ -W\end{array}\right)  $$
is, as expected, an orthogonal transformation with negative determinant. 
 The identity on the right shows that
the transformation amounts to a sign change of $W$ composed with a rotation by the characteristic angle $\beta$ (a parameter serving as proxy  for the moment of inertia)
that mixes up (and thus couples) linear velocity and certain components of the angular velocity matrix. When the moment of inertia parameter $\gamma$ is zero,
the characteristic angle $\beta$ is also zero and the block matrix reduces to the identity. In this case,  
$$\left(\begin{array}{c}\overline{u} \\W\end{array}\right)\mapsto \left(\begin{array}{c}\overline{u} \\ -W\end{array}\right), $$
so that  $\overline{u}$ and $W$ are fully decoupled.  Later we will return   to this collision map  and compare it  with a rolling-over-the-edge map for nonholonomic billiards.
A manifestation of time reversibility is  the involutive property
$$\left(\begin{array}{cc}c_\beta  & \ \ s_\beta  \\s_\beta  & -c_\beta \end{array}\right)^2=I. $$
It is useful to register here that, using the $\overline{S}$ and $W$ definitions, the kinetic energy Riemannian metric takes the form
\begin{equation}\label{SWu} \langle \xi,\zeta\rangle_q:=m\left\{\frac12 \text{Tr}\left(\overline{S}_\xi \overline{S}_\zeta^\intercal\right)+W_\xi\cdot W_\zeta +u_\xi\cdot u_\zeta\right\}.
\end{equation}

\subsection{Dimensions $2$ and $3$}
For two-dimensional regions ($n=2$), $\overline{S}=0$ and the inner product (\ref{SWu}) reduces to standard dot product in $\mathbb{R}^3$, in which the third variable is  (the one-dimensional)  $W$.  In this case, $S=sJ$, where $J$ is positive (counterclockwise) rotation by $\pi/2$ in $\mathbb{R}^2$ and at a boundary point $a$ of the
planar billiard region $\mathcal{R}$, we have $W=S\nu = s \tau_a$, where $\tau$ is the unit tangent vector to the boundary curve of $\mathcal{R}$ oriented counterclockwise.

\begin{figure}[htbp]
\begin{center}
\includegraphics[width=4.0 in]{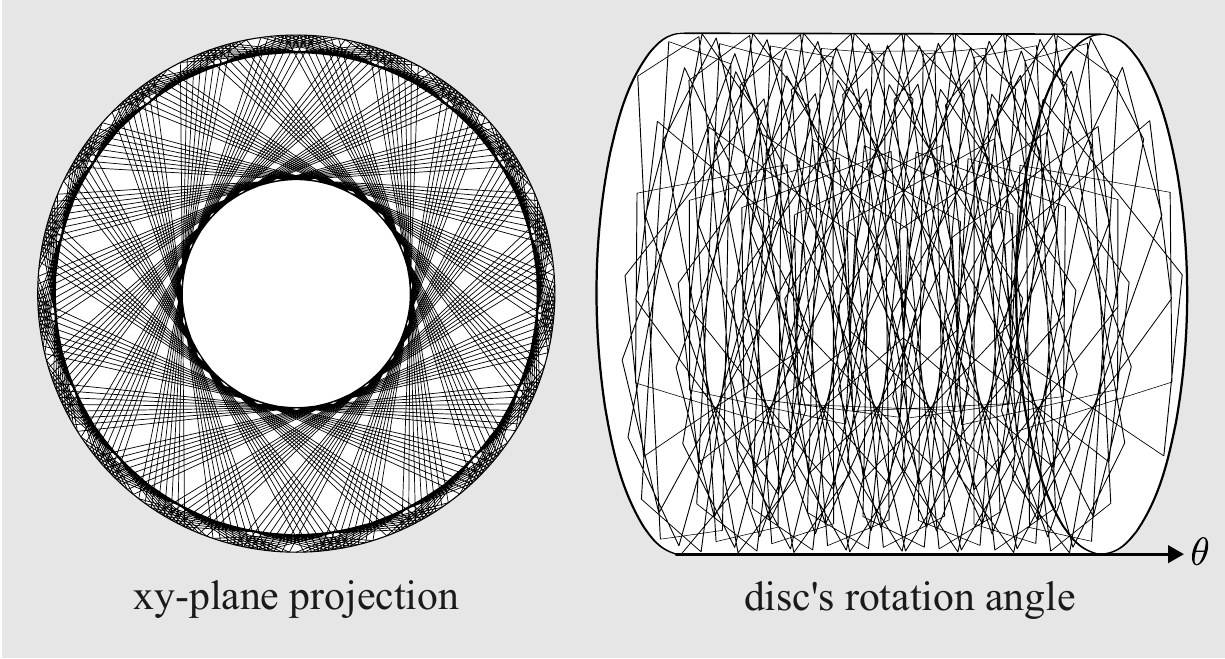}\ \ 
\caption{\small{Trajectory of a no-slip billiard system on a disc.  Left: trajectory of the moving disc's center, showing the characteristic double caustic. Right: $3$-dimensional configuration space trajectory, including the moving  disc's rotation angle. }} 
\label{noslip_disc}
\end{center}
\end{figure}

The kinetic state of the billiard disc is specified by $(\hat{u}, \overline{u}, s)$, where $\hat{u}=u\cdot \nu_a$ and $\overline{u}=u\cdot\tau_a$, and the collision map becomes
$$C_a: \left(\begin{array}{c}\hat{u}  \\\overline{u} \\ s\end{array}\right) \mapsto \left(\begin{array}{rcc}-1 & 0 & 0 \\0 & c_\beta & s_\beta \\ 0 & s_\beta  &\!\! -c_\beta\end{array}\right) \left(\begin{array}{c}\hat{u}  \\\overline{u} \\s \end{array}\right).$$
Figure \ref{noslip_disc} shows a segment of trajectory of a no-slip billiard system on a disc shaped table. The actual trajectory is in the  $3$-dimensional solid torus $D\times S^1$,
where the disc $D$ contains the positions of the center of the moving particle (also a disc) and $S^1$ the orientations in space. (The figure shows an orbit segment in the covering space $D\times \mathbb{R}$ rather than $D\times S^1$.)
The projection of the center of the moving disc to the circular table (left of Figure \ref{noslip_disc}) shows a characteristic double caustic, an easy to understand feature based solely on  time reversibility and symmetry of the billiard domain. When the moment of inertia is $0$ ($\gamma=0$), one obtains an ordinary billiard trajectory with a single caustic circle. 

\begin{figure}[htbp]
\begin{center}
\includegraphics[width=2.5 in]{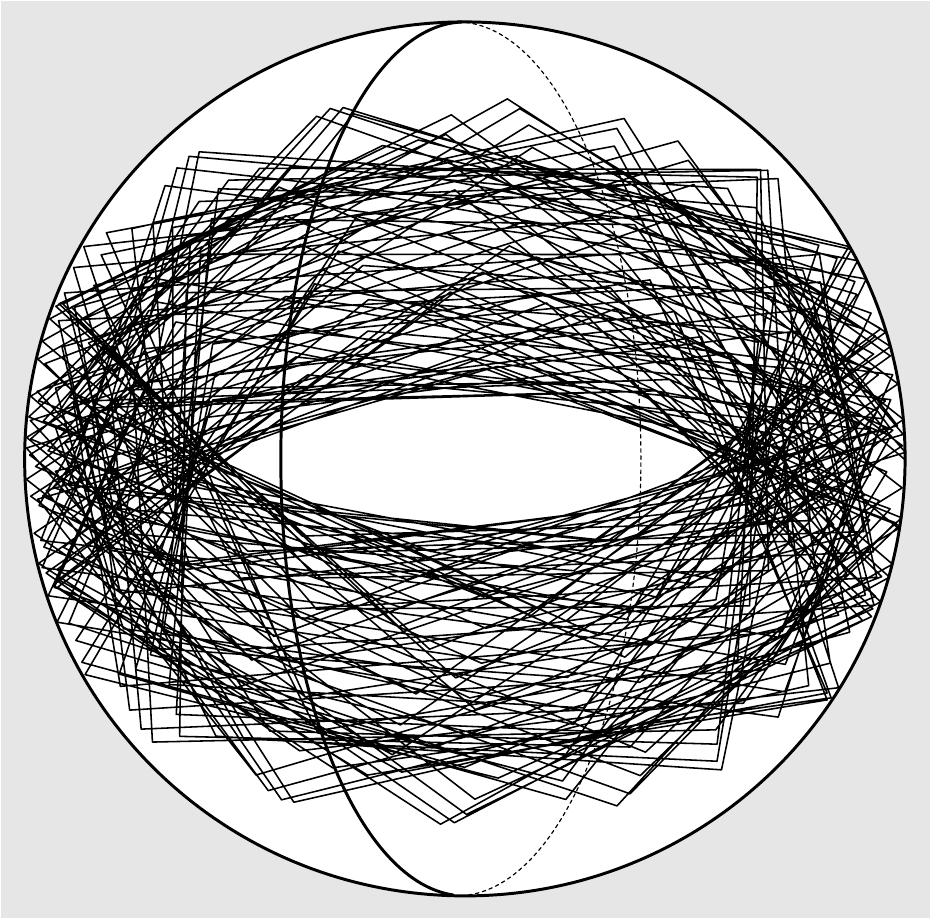}\ \ 
\caption{\small{A segment of trajectory of the center of a no-slip ball in a sphere.}} 
\label{sphere}
\end{center}
\end{figure} 

For $n=3$, let $J_a$ be the operator of positive rotation by $\pi/2$ on the $2$-dimensional $V_a$ (the tangent space to the boundary of $\mathcal{R}$ at $a$.) Here {\em positive} is defined by the outward pointing $\nu_a$ (and the right-hand rule of vector calculus.) Then $\overline{S}=\overline{s}J_a$, $\overline{s}\in \mathbb{R}$ may be called
the {\em tangential spin} at the point of contact with the boundary. In this case, $(\overline{s}, W)$ may be used as proxies for $S$ and
$$ \langle \xi,\zeta\rangle_q:=m\left(\overline{s}_\xi \overline{s}_\zeta+W_\xi\cdot W_\zeta +u_\xi\cdot u_\zeta\right),$$
while the no-slip collision map takes the form
$$C_a:\left(\begin{array}{c}\overline{s} \\ \hat{u} \\\overline{u} \\W \end{array}\right) \mapsto \left(\begin{array}{crcc}1 & 0 & 0 & 0 \\0 & -1 & 0 & 0 \\0 & 0 & c_\beta & s_\beta \\0 & 0 & s_\beta &\!\! -c_\beta\end{array}\right) \left(\begin{array}{c}\overline{s} \\ \hat{u}  \\\overline{u} \\W\end{array}\right). $$
This is a linear map in $SO(6)$. As  before, $\hat{u}:=u\cdot \nu_a.$ The invariance of $\overline{s}$ makes physical sense: a change in $\overline{s}$ would require an infinite impulsive  torque tangent to the wall,  acting on the ball at the single point of contact. It is also not surprising that the normal component of the linear velocity
changes sign. The essence of the no-slip collision map lies in the coupling  of linear and angular velocities controlled by the parameter $\gamma$ that defines $c_\beta, s_\beta$.

As an example, Figure \ref{sphere} shows a segment of trajectory of the center of a no-slip ball bouncing inside a sphere. The figure shows a characteristic spherical annulus containing the trajectory. 
It is not clear how to explain all the (numerically) observed features of this example based on symmetry considerations. For example, it is an interesting exercise   to determine the axis and width of this annular envelope as a function of a trajectory's initial conditions.

\subsection{No-slip billiards in general cylinders}
Let $\mathcal{R}=\overline{\mathcal{R}}\times \mathbb{R}$ be an $(n+1)$-dimensional domain with $\overline{\mathcal{R}}\subseteq\mathbb{R}^n$ a billiard domain in 
dimension $n$. We call such region a {\em cylinder}. 
 We refer to  $e:=(0,\dots, 0, 1)$  as the {\em cylinder axis}  unit vector and to $\overline{\mathcal{R}}$ as the {\em cross-section}. Let $\nu$ be the outward pointing unit normal vector field on the boundary of $\mathcal{R}$, which is also a unit normal vector field on the boundary of $\overline{\mathcal{R}}$.
It turns out that the orthogonal projection of trajectories of the no-slip billiard ball in $\mathcal{R}$ to the cross-section are trajectories of the no-slip
$n$-dimensional (projected) ball on   $\overline{\mathcal{R}}$. The two balls in different dimensions are assumed to have the same moment of inertia parameter $\gamma$ or $\beta$.

\begin{theorem}\label{theorem_project}
Let $\Pi$ be  the orthogonal projection from $\mathbb{R}^{n+1}$ to $\mathbb{R}^n$. Then $\Pi$ maps trajectories of the no-slip billiard system in a cylinder of  dimension $n+1$ to
trajectories of the corresponding cross-sectional no-slip billiard system in dimension $n$, assuming the moment of inertia parameters of the two systems are the same.
This holds even when the $(n+1)$-dimensional ball is subjected to a constant force parallel to the axis of the cylinder.
\end{theorem}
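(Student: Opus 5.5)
The plan is to define an explicit projection on kinetic states and show that it intertwines the two dynamics. The dynamics has two ingredients: free flight between collisions, and the collision map (\ref{collision_map}) at collisions. Write $\mathbb{R}^{n+1}=\mathbb{R}^n\oplus\mathbb{R}e$ and regard $\Pi$ as an $n\times(n+1)$ matrix, so that $\Pi^\intercal$ is the inclusion $\mathbb{R}^n\hookrightarrow\mathbb{R}^{n+1}$. To a state $(U,u)\in\mathfrak{so}(n+1)\times\mathbb{R}^{n+1}$ at center position $a$, I associate the state $(\Pi U\Pi^\intercal,\Pi u)\in\mathfrak{so}(n)\times\mathbb{R}^n$ at position $\Pi a$. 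Here $\Pi U\Pi^\intercal$ is the upper-left $n\times n$ block of $U$, which is again skew-symmetric. The radius $r$ and the parameter $\gamma$ (hence $\beta$) are the same for both balls. The orientation component $A$ plays no role, since (\ref{collision_map}) does not depend on it; if desired, the orientation of the projected ball can be obtained by integrating the projected angular velocity.

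\emph{Free flight.} Between collisions the angular velocity $U$ is constant. The center velocity $u$ is either constant or, with a constant force $fe$ along the axis, changes only in its $e$-component. Hence $\Pi u$ and $\Pi U\Pi^\intercal$ are constant, and $\Pi a$ moves uniformly along a straight line. This is exactly free motion of the $n$-dimensional ball.

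\emph{Collision times and normals.} Since $\mathcal{R}=\overline{\mathcal{R}}\times\mathbb{R}$, the point $a$ lies on $\partial\mathcal{R}$ if and only if $\Pi a\in\partial\overline{\mathcal{R}}$. So collisions happen at the same times in both systems. Moreover the normal $\nu_a=\Pi^\intercal\nu_{\Pi a}$ has no $e$-component, and regular boundary points correspond to regular boundary points, corners to corners.

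\emph{Equivariance at a collision.} The main computation is the identity
$$\Pi C_a(U,u)\ \text{(projected)} = C_{\Pi a}(\Pi U\Pi^\intercal,\Pi u).$$
I will use two facts, both valid because $\nu=\Pi^\intercal\Pi\nu$. First, $\Pi(U\nu)=(\Pi U\Pi^\intercal)\nu$. Second, for any $w$, $\Pi(\nu\wedge w)\Pi^\intercal=\nu\wedge\Pi w$; this follows from $(\nu\wedge w)x=(\nu\cdot x)w-(w\cdot x)\nu$ with $x=\Pi^\intercal y$.

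Applying these to the angular part of (\ref{collision_map}) gives
$$\Pi U\Pi^\intercal+\tfrac{s_\beta}{\gamma r}\,\nu\wedge\bigl(\Pi u-r\,\Pi U\Pi^\intercal\nu\bigr).$$
For the linear part, using $\Pi u\cdot\nu=u\cdot\nu$, I get
$$c_\beta\Pi u-\tfrac{s_\beta}{\gamma}(\Pi u\cdot\nu)\nu+s_\beta\gamma r\,\Pi U\Pi^\intercal\nu.$$
These two expressions are exactly the $n$-dimensional collision map applied to the projected state.

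Together, these three steps show by induction over the sequence of collisions that $\Pi$ carries trajectories to trajectories. The key structural point is that the entries of $U$ coupling to $e$, and the component $u\cdot e$, never feed back into the projected variables: they enter neither the free flight of $\Pi a$ nor the projected collision formula.

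I expect the main obstacle to be bookkeeping rather than substance. I must check carefully the commutation $\Pi(\nu\wedge w)\Pi^\intercal=\nu\wedge\Pi w$, which uses the generalized cross product convention. I also need to state precisely how corner collisions, or any convention adopted there, are treated, so that they correspond under the projection. For singular boundary points I would simply restrict the claim to trajectories that avoid corners, as is standard.
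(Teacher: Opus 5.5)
Your proof is correct and is essentially the paper's argument. The paper's key step, its third proposition, is exactly your intertwining identity $\Pi_e^\perp\circ C_a = C_{\overline{a}}\circ\Pi_e^\perp$ with $\Pi_e^\perp(S,u)=(\Pi_e^\perp S\Pi_e^\perp,\Pi_e^\perp u)$, checked by direct computation from the explicit collision formula, together with the remark that an axial force affects neither collisions nor the projected flight.

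The paper also proves the orthogonal splitting $\mathfrak{se}(n+1)=\mathfrak{se}(e_a^\perp)\oplus(e\wedge e^\perp,\mathbb{R}e)$ and shows it is invariant under $C_a$. Your direct computation shows this is not logically needed for the projection statement. Your explicit treatment of free flight and of matching collision times supplies details the paper leaves implicit.
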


Before proving Theorem \ref{theorem_project}, we need  a few definitions.
At any regular  $a$ on the boundary of $\mathcal{R}$, let $\Pi^\perp_{\nu_a}$ be the orthogonal projection to the hyperplane $\nu_a^\perp$, $\Pi^\perp_e$ be the
orthogonal projection to the hyperplane $e^\perp$, and $\Pi_a$ be the orthogonal projection to the codimension-$2$ subspace $(\text{span}\{\nu_a, e\})^\perp$.
Let $\mathfrak{se}\left(e_a^\perp\right)$ be the Lie algebra of the Euclidean group on the hyperplane orthogonal to $e$ at $a$ and $\mathfrak{se}(n)$ be the
Lie algebra of the Euclidean group on $\mathbb{R}^n\cong \mathbb{R}^n\times\{0\}$, regarded as the subspace of $\mathbb{R}^{n+1}$. These two Lie algebras are naturally isomorphic.   

Theorem \ref{theorem_project} is now a corollary of the following three propositions.
\begin{proposition}
Define    the $(n+1)$-dimensional space 
\begin{equation}\label{direct}\left(e\wedge e^\perp_a, \mathbb{R}e\right):=\left\{(e\wedge v, \lambda e)\in \mathfrak{se}(n+1): v\cdot e=0 \text{ and } \lambda\in \mathbb{R}\right\}.\end{equation} Then
$$\mathfrak{se}(n+1)=\mathfrak{se}\left(e_a^\perp\right)\oplus \left(e\wedge e^\perp, \mathbb{R} e\right)$$
 is an orthogonal direct sum of vector spaces. (The second summand is not a Lie algebra.)
\end{proposition}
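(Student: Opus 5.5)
Identify $\mathfrak{se}(n+1)$ with $\mathfrak{so}(n+1)\times\mathbb{R}^{n+1}$, carrying the kinetic energy inner product
$$\langle (U,u),(V,v)\rangle = m\left\{\tfrac{(r\gamma)^2}{2}\operatorname{Tr}(UV^\intercal)+u\cdot v\right\}.$$
In this product the $\mathfrak{so}$ and translational parts are orthogonal, and the $\mathfrak{so}$ part uses a multiple of the Frobenius form. So it suffices to prove two separate orthogonal decompositions:
$$\mathfrak{so}(n+1)=\mathfrak{so}(e^\perp)\oplus e\wedge e^\perp \quad\text{and}\quad \mathbb{R}^{n+1}=e^\perp\oplus\mathbb{R}e .$$
Here $\mathfrak{se}(e_a^\perp)$ is realized as pairs $(U,u)$ with $U$ skew, $Ue=0$ (so $U$ preserves $e^\perp$), and $u\cdot e=0$.

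The translational decomposition is immediate. For the rotational part, I will use the identity already quoted from \cite{CFZ21}, now applied with the unit vector $e$ in place of $\nu_a$: any $U\in\mathfrak{so}(n+1)$ satisfies
$$U=\Pi^\perp_e U\Pi^\perp_e + e\wedge Ue .$$
To check it, evaluate both sides on $e$ and on $w\in e^\perp$, using $e\cdot Ue=0$ and the definition $(u\wedge v)w=(u\cdot w)v-(v\cdot w)u$.

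The first summand kills $e$ and maps into $e^\perp$, so it lies in $\mathfrak{so}(e^\perp)$. The vector $v:=Ue$ satisfies $v\cdot e=0$, so the second summand lies in $e\wedge e^\perp$. This gives $\mathfrak{so}(n+1)=\mathfrak{so}(e^\perp)+e\wedge e^\perp$.

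For orthogonality (which also gives directness of the sum), take $V\in\mathfrak{so}(e^\perp)$ and compute
$$\operatorname{Tr}\big(V(e\wedge v)^\intercal\big)=-\operatorname{Tr}\big(V(e\wedge v)\big).$$
Evaluate the trace in an orthonormal basis $\{e,f_1,\dots,f_n\}$ of $\mathbb{R}^{n+1}$ with $f_i\in e^\perp$:
\begin{itemize}
\item $(e\wedge v)e=v$, and $Vv$ is orthogonal to $e$, so the $e$-entry is $e\cdot Vv=0$;
\item $(e\wedge v)f_i=-(v\cdot f_i)e$, and $Ve=0$, so each $f_i$-entry is $0$.
\end{itemize}
Hence the trace vanishes. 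As a dimension check, $\dim e\wedge e^\perp=n$, because $v\mapsto e\wedge v$ is injective on $e^\perp$ (apply it to $e$). Then $\binom{n}{2}+n=\binom{n+1}{2}$ and $(n)+1=n+1$, so the second summand has dimension $n+1$, as claimed. The sum is therefore an orthogonal direct sum.

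For the final remark, a bracket computation shows the second summand is not a Lie subalgebra. For instance, $[e\wedge v,e\wedge w]$ is, up to sign, $v\wedge w$, which lies in $\mathfrak{so}(e^\perp)$ and is nonzero when $v,w$ are independent; I will fix the sign by direct evaluation.

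The only real obstacle is bookkeeping: matching the paper's sign convention for $\wedge$, confirming the decomposition identity with $e$ in place of $\nu_a$, and making sure the identification of $\mathfrak{se}(e_a^\perp)$ with $\mathfrak{se}(n)$ is the subspace described above.
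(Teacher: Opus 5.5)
Your argument is correct and follows essentially the paper's proof: show the two summands are orthogonal under the kinetic-energy inner product, then use the dimension count $\frac{(n+1)(n+2)}{2}=\frac{n(n+1)}{2}+n+1$; your splitting $U=\Pi_e^\perp U\Pi_e^\perp+e\wedge Ue$ gives spanning directly, and your basis computation supplies the orthogonality check the paper calls ``readily checked.'' One small caveat concerns the parenthetical remark, which the paper does not prove: your witness $[e\wedge v,e\wedge w]=v\wedge w$ needs $n\ge 2$, whereas the mixed bracket $[(e\wedge v,0),(0,e)]=(0,v)$, a nonzero element of $\mathfrak{se}(e_a^\perp)$, works for every $n\ge 1$.
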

\begin{proof} Orthogonality relative to the inner product
$$\langle (S_1, u_1), (S_2, u_2)\rangle := m\left\{\frac{1}{2}\text{Tr}\left(S_1 S_2^\intercal\right) + u_1\cdot u_2\right\} $$
is readily checked. Equality then follows from counting dimensions:  
$$ \frac{(n+1)(n+2)}{2} = \frac{n(n+1)}{2} + n+1$$
and $\text{dim }\mathfrak{se}(n)=\frac{n(n+1)}{2}$.
\end{proof}

\begin{proposition}
The no-slip collision map $C_a$ respects the orthogonal decomposition (\ref{direct}).
\end{proposition}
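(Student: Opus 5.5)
Since $C_a$ is an orthogonal involution of $\mathfrak{se}(n+1)$ (orthogonal and self-inverse, hence self-adjoint), it suffices to show that it maps one summand, say $\left(e\wedge e^\perp, \mathbb{R}e\right)$, into itself. The orthogonal complement $\mathfrak{se}(e_a^\perp)$ is then automatically invariant as well. I would nevertheless check both summands directly from the explicit formula (\ref{collision_map}), since both computations are short.

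At a regular boundary point $a$ of the cylinder the normal $\nu_a$ lies in $e^\perp$, because the boundary is ruled by lines parallel to $e$. Hence $\nu_a\cdot e=0$ and $e\in V_a$.

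\textbf{The summand $\mathfrak{se}(e_a^\perp)$.} Take $(U,u)$ with $U$ preserving $e^\perp$ and vanishing on $e$, and with $u\in e^\perp$. Then $U\nu_a\in e^\perp$, so $u-rU\nu_a\in e^\perp$. Since $\nu_a\in e^\perp$ too, $\nu_a\wedge(u-rU\nu_a)$ is an infinitesimal rotation of $e^\perp$ and annihilates $e$. The linear part $c_\beta u-\frac{s_\beta}{\gamma}(u\cdot\nu_a)\nu_a+s_\beta\gamma rU\nu_a$ is a combination of vectors in $e^\perp$. So $C_a(U,u)$ again lies in $\mathfrak{se}(e_a^\perp)$.

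\textbf{The summand $\left(e\wedge e^\perp, \mathbb{R}e\right)$.} Take $(e\wedge v,\lambda e)$ with $v\perp e$. From the definition of $\wedge$,
$$(e\wedge v)\nu_a=(e\cdot\nu_a)v-(v\cdot\nu_a)e=-(v\cdot\nu_a)e.$$
Then:
\begin{itemize}
\item $u-rU\nu_a=(\lambda+r\,v\cdot\nu_a)e$, so the angular correction term $\frac{s_\beta}{\gamma r}\nu_a\wedge(\cdot)$ is a multiple of $\nu_a\wedge e=-e\wedge\nu_a$. Since $\nu_a\in e^\perp$, this is of the form $e\wedge v'$ with $v'\perp e$.
\item The linear part is $c_\beta\lambda e - 0 - s_\beta\gamma r(v\cdot\nu_a)e\in\mathbb{R}e$, because $u\cdot\nu_a=\lambda\, e\cdot\nu_a=0$.
\end{itemize}
Thus $C_a$ preserves this summand, which is consistent with orthogonality.

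The main obstacle is modest: one must verify that $\mathfrak{se}(e_a^\perp)$ consists of pairs $(U,u)$ with $Ue=0$, $U(e^\perp)\subseteq e^\perp$ and $u\in e^\perp$, and keep the sign conventions of $\wedge$ straight. Once this is set up, the result also shows that the restriction of $C_a$ to $\mathfrak{se}(e_a^\perp)$ is given by the same formula (\ref{collision_map}) in dimension $n$ with the same $\gamma$, which is what the next step toward Theorem \ref{theorem_project} needs.
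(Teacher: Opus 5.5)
Your argument is correct and is essentially the paper's: orthogonality of $C_a$ reduces the claim to invariance of $\left(e\wedge e^\perp,\mathbb{R}e\right)$, which is then checked by direct substitution using $\nu_a\cdot e=0$. You use the $(U,u)$ form (\ref{collision_map}) where the paper uses the $(S,u)$ form (\ref{collision_S}); after rescaling $S=\gamma rU$ and using $s_\beta/\gamma=1+c_\beta$, your output agrees with the paper's $v'$ and $\lambda'$, and your separate check of $\mathfrak{se}(e_a^\perp)$ is redundant but harmless.
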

\begin{proof}
Since  $C_a$ is orthogonal, 
it suffices to show that $C_a$ maps $\left(e\wedge e^\perp, \mathbb{R} e\right)$ to itself. Now
$$C_a(e\wedge v, \lambda e)=\left(e\wedge v', \lambda' e\right)$$
where
$$v' = \Pi_{a}v - \left(s_\beta \lambda + c_\beta \nu_a\cdot v\right)\nu_a, \ \ \lambda'=c_\beta\lambda -s_\beta \nu_a\cdot v. $$
 This is obtained from a direct application of $C_a$ in the form of Equation (\ref{collision_S}).
\end{proof}

\begin{proposition}
The projection map  $\Pi_e^\perp$ intertwines the collision map $C_a$ on $\mathfrak{se}(n+1)$ associated to the no-slip billiard system on $\mathcal{R}$ and the collision map $C_{\overline{a}}$ on $\mathfrak{se}(n)$ associated to the no-slip billiard system on $\overline{\mathcal{R}}$,   where
$\overline{a}:=\Pi_e^\perp a$. Both maps $C_a$ and $C_{\overline{a}}$ are assumed to have the same moment of inertia parameter $\gamma$ (or $\beta$).
\end{proposition}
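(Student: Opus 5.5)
We need to show that $\Pi_e^\perp\circ C_a = C_{\overline a}\circ \Pi_e^\perp$. Here $\Pi_e^\perp$ is understood as the orthogonal projection of $\mathfrak{se}(n+1)$ onto the summand $\mathfrak{se}(e_a^\perp)\cong\mathfrak{se}(n)$ in the orthogonal decomposition of the first proposition, with kernel $\left(e\wedge e^\perp,\mathbb{R}e\right)$. Concretely, for $(S,u)$ it sends
$$(S,u)\mapsto \left(\Pi_e^\perp S\,\Pi_e^\perp,\ \Pi_e^\perp u\right).$$
To see that this is the projection, split $S=\Pi_e^\perp S\Pi_e^\perp + e\wedge Se$ (the decomposition $U=\Pi U\Pi+\nu\wedge U\nu$ quoted from \cite{CFZ21}, applied with $e$ in place of $\nu$) and $u=\Pi_e^\perp u+(u\cdot e)e$. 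The second pieces lie in $\left(e\wedge e^\perp,\mathbb{R}e\right)$.

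By the second proposition, $C_a$ preserves $\left(e\wedge e^\perp,\mathbb{R}e\right)$. Since $C_a$ is orthogonal, it also preserves the orthogonal complement $\mathfrak{se}(e_a^\perp)$. An orthogonal map preserving both summands commutes with the orthogonal projection onto either one. Hence
$$\Pi_e^\perp C_a = C_a|_{\mathfrak{se}(e_a^\perp)}\circ \Pi_e^\perp.$$
The problem therefore reduces to checking that $C_a$ restricted to $\mathfrak{se}(e_a^\perp)$, transported to $\mathfrak{se}(n)$ via the natural isomorphism (translation by $(a\cdot e)e$), equals $C_{\overline a}$.

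For this identification I will use the explicit formula (\ref{collision_map}), or equivalently (\ref{collision_S}). Because the boundary of $\mathcal{R}$ is $\partial\overline{\mathcal{R}}\times\mathbb{R}$, the normal satisfies $\nu_a\cdot e=0$, and $\nu_a$ equals the normal $\nu_{\overline a}$ of $\overline{\mathcal{R}}$ at $\overline a$. Now take $(U,u)$ with $U$ preserving $e^\perp$ and $Ue=0$, and with $u\in e^\perp$. Then
\begin{itemize}
\item $U\nu_a\in e^\perp$,
\item $u-rU\nu_a\in e^\perp$,
\item $\nu_a\wedge(u-rU\nu_a)$ annihilates $e$ and preserves $e^\perp$,
\item $(u\cdot\nu_a)\nu_a\in e^\perp$.
\end{itemize}
Every term of (\ref{collision_map}) is thus built from $U$, $u$ and $\nu_a$ restricted to $e^\perp$. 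The formula is dimension-independent, and the constants $c_\beta,s_\beta,\gamma,r$ coincide by hypothesis, so the right-hand side is literally the $n$-dimensional formula for $C_{\overline a}(U|_{e^\perp},u)$.

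The only point needing care, and the main obstacle, is the bookkeeping of the identification. First, the generalized cross product $\wedge$ on $\mathbb{R}^{n+1}$, restricted to vectors in $e^\perp$, must agree with the $n$-dimensional one and extend by zero on $e$; this is immediate from the defining formula $(u\wedge v)w=(u\cdot w)v-(v\cdot w)u$. Second, the ball radius $r$ and the parameter $\gamma$ (hence the moment of inertia $\lambda=(\gamma r)^2/2$) must be taken equal in the two systems, which is exactly the stated hypothesis; one should not insist on a uniform mass distribution, since its $\gamma$ depends on dimension. With these checks done, the intertwining identity follows.
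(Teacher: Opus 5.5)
Your argument is correct, but it is organized differently from the paper's. The paper checks $\Pi_e^\perp\circ C_a=C_{\overline a}\circ\Pi_e^\perp$ directly from (\ref{collision_S}) on an arbitrary $(S,u)\in\mathfrak{se}(n+1)$. The content of that ``straightforward application'' is that, because $\nu_a\perp e$, three identities hold: $\Pi_e^\perp$ commutes with the projection onto $\nu_a^\perp$, $\Pi_e^\perp(\nu_a\wedge x)\Pi_e^\perp=\nu_a\wedge\Pi_e^\perp x$, and $\Pi_e^\perp S\nu_a=(\Pi_e^\perp S\Pi_e^\perp)\nu_a$. Together they let the projection pass through each term of the formula.

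You instead invoke the preceding proposition (invariance of $\left(e\wedge e^\perp,\mathbb{R}e\right)$) and orthogonality of $C_a$ to make $C_a$ block diagonal. This means the contributions of $e\wedge Se$ and $(u\cdot e)e$ never need to be tracked. The remaining check on $\mathfrak{se}(e_a^\perp)$ then reduces to noticing that the dimension-free formula only involves vectors in $e^\perp$. Your route gives a cleaner structural picture: $\Pi_e^\perp$ is the projection onto a $C_a$-invariant orthogonal summand on which $C_a$ literally is $C_{\overline a}$. The price is dependence on the previous proposition, whose own proof is an explicit computation on the complementary summand. The paper's check is self-contained and uses neither that proposition nor orthogonality.

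One small correction: because you work with (\ref{collision_map}) in the variables $(U,u)$, you need the two balls to have the same radius $r$ as well as the same $\gamma$. That is natural here, but it is not literally the stated hypothesis. In the variables $(S,u)$ of (\ref{collision_S}), which the paper uses, $r$ drops out and only $\gamma$ (equivalently $\beta$) has to agree, exactly as the statement assumes.
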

 \begin{proof}
We can verify that  $\Pi_e^\perp\circ  C_a = C_{\overline{a}}\circ \Pi_e^\perp$ holds by  a straightforward application of the explicit form of the collision maps as
given in Equation (\ref{collision_S}). One should keep in mind that $\Pi_e^\perp$ maps $(S,u)$ to $\left(\Pi_e^\perp S\Pi_e^\perp, \Pi_e^\perp u\right).$
 \end{proof}

Note that  an external force  cannot affect the discontinuous change in velocities at a collision event. Physically,   impact forces are very intense over a very short time interval,  during which
ordinary forces will have negligible contribution to  changing the particle's linear and angular  momentum. Additionally,   a force parallel to the axis of the cylinder will not affect the projection of trajectories to cross-sectional hyperplanes.

\subsection{A remark concerning  symmetries}
It is useful to note that Euclidean isometries (not necessarily orientation preserving) mapping the boundary of $\mathcal{R}$ to itself are symmetries
of the no-slip system in the sense of the following proposition. Notations are as follows.
Suppose $f\in E(n)$ is an Euclidean isometry (not necessarily positive) mapping the regular boundary point $a$ in $\mathcal{R}$ to another such point $f(a)$ for which
$F_a \nu_a=\nu_{f(a)}$, where $F_a:=df_a$ is the differential of $f$ at $a$. It follows that $F_a$ maps $V_a$ to $V_{f(a)}$ (recall that $V_a$ denotes the tangent space at $a$ to the boundary of $\mathcal{R}$.) It is easily checked that $$F_a\Pi_a =\Pi_{f(a)} F_a, \ \ F_a\overline{u}=\overline{F_a u},\ \ F_a (u\wedge v) F_a^{-1} =F_a u \wedge F_a v, \ \ F_a W_S = W_{F_a SF_a^{-1}}.$$
Writing, for simplicity, $f_au:=F_au$ and  $f_a S:=F_a S F_a^{-1}$, we obtain the transformations
$$\overline{u}\mapsto f_a\overline{u}= \overline{f_a u}, \ \ \overline{S}\mapsto f_a\overline{S}= \overline{f_a S},  \ \  W_S\mapsto f_a W=W_{f_a S}, \ \ \left(\overline{u}, W\right)\mapsto \left(f_a\overline{u}, f_a W\right).$$
From these and (\ref{collision_map_bis}) we obtain the equivariant property of the collision map:
$$f_a\circ C_a = C_{f(a)}\circ f_a. $$
This holds, in particular, whenever the Euclidean isometry $f\in E(n)$ restricts to a self-map  of (the boundary of) $\mathcal{R}$. 
We say in this case that $f$ is a {\em symmetry} of the no-slip billiard system. Let us write the action of $f$ on the no-slip billiard map as
$f^*C$, so that 
$$ (f^*C)_a:= f_a^{-1}\circ C_{f(a)}\circ f_a. $$
Then $f$ is a symmetry of the no-slip billiard system if $f^*C=C$. We conclude:
\begin{proposition}
Any Euclidean isometry  mapping  $\mathcal{R}$ to itself is a symmetry of the no-slip billiard system.
\end{proposition}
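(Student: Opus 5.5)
The plan is to reduce the proposition to the equivariance identity $f_a\circ C_a = C_{f(a)}\circ f_a$ derived just above. The one point that needs care is the orientation of the normal. First, let $f\in E(n)$ be a Euclidean isometry with $f(\mathcal{R})=\mathcal{R}$. Since $f$ is a homeomorphism of $\mathbb{R}^n$ preserving $\mathcal{R}$, it preserves both the boundary of $\mathcal{R}$ and its interior. It also maps regular boundary points to regular boundary points, because $f$ is smooth with smooth inverse and so carries tangent spaces to tangent spaces: $F_a V_a = V_{f(a)}$. Since $F_a$ is orthogonal, $F_a\nu_a$ is a unit vector orthogonal to $V_{f(a)}$, so $F_a\nu_a=\pm\nu_{f(a)}$.

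Next I will show the sign is $+$. Because $\nu_a$ points into $\mathcal{R}$, we have $a+t\nu_a$ in the interior of $\mathcal{R}$ for small $t>0$. Applying $f$ (affine, $f(x)=F_a x + b$) gives $f(a)+tF_a\nu_a$ in the interior of $\mathcal{R}$ for small $t>0$. Hence $F_a\nu_a$ is inward pointing at $f(a)$, that is, $F_a\nu_a=\nu_{f(a)}$. At this point $f$ satisfies exactly the hypotheses of the preceding discussion.

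It remains to check equivariance directly, without leaning on the verbal derivation. I will use the form (\ref{collision_S}) of $C_a$ in the variables $(S,u)$, with $f_a$ acting by $u\mapsto F_a u$ and $S\mapsto F_aSF_a^{-1}$. The identities $F_a\Pi_a=\Pi_{f(a)}F_a$, $F_a(u\wedge v)F_a^{-1}=F_au\wedge F_av$, $F_a W_S=W_{F_aSF_a^{-1}}$ and $(F_au)\cdot\nu_{f(a)}=u\cdot\nu_a$ hold because $F_a$ is orthogonal and $F_a\nu_a=\nu_{f(a)}$. Substituting them term by term into (\ref{collision_S}):
\begin{itemize}
\item the $\overline{S}$ term maps to $\overline{f_aS}$;
\item the normal velocity term $-(u\cdot\nu_a)\nu_a$ maps to $-(f_au\cdot\nu_{f(a)})\nu_{f(a)}$;
\item the mixing term $\bigl(\nu_a\wedge(s_\beta\overline{u}-c_\beta W),\ c_\beta\overline{u}+s_\beta W\bigr)$ maps to the same expression at $f(a)$ evaluated on $(f_aS,f_au)$.
\end{itemize}
This gives $f_a\circ C_a=C_{f(a)}\circ f_a$, which is the statement $(f^*C)_a=C_a$ for every regular $a$, i.e.\ $f^*C=C$.

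The main obstacle is the case where $f$ is orientation reversing, so $\det F_a=-1$. Then conjugation $S\mapsto F_aSF_a^{-1}$ is not conjugation by an element of $SO(n)$, and one must make sure nothing in (\ref{collision_S}) depends on orientation. In dimension $3$, for instance, $\overline{S}=\overline{s}J_a$ and the operator $J_a$ reverses sign, so the tangential spin $\overline{s}$ changes sign. The formula (\ref{collision_S}) is written purely in terms of $\wedge$, $\Pi_a$ and $\nu_a$, none of which use an orientation, so the verification goes through for all of $E(n)$. I would remark on this explicitly. I would also note that the action of $f$ on configurations in $SE(n)$ may need the conjugated form $A\mapsto F_aAF_a^{-1}$ to stay in $SO(n)$, though this affects only how the symmetry acts on orientations, not the collision law.
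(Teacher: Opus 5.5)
Your proof is correct and follows the paper's route. Like the paper, you reduce the proposition to the equivariance $f_a\circ C_a=C_{f(a)}\circ f_a$ using the identities $F_a\Pi_a=\Pi_{f(a)}F_a$, $F_a(u\wedge v)F_a^{-1}=F_au\wedge F_av$ and $F_aW_S=W_{F_aSF_a^{-1}}$; you check this on Equation (\ref{collision_S}) rather than its equivalent component form (\ref{collision_map_bis}). Your explicit argument that $f(\mathcal{R})=\mathcal{R}$ forces $F_a\nu_a=+\nu_{f(a)}$ is a worthwhile addition: the paper takes this for granted, and it genuinely needs $f$ to preserve $\mathcal{R}$ itself, not merely its boundary.
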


\section{Rolling systems} 
Rolling systems are classical examples of nonholonomic mechanical systems. An especially convenient set of equations describing the motion motion of a rigid sphere with rotationally symmetric mass distribution rolling on submanifolds of Euclidean space (of arbitrary codimension) was obtained in \cite{CFZ21} and is reviewed in this section.
These equations show, in particular, that such systems may be regarded as one-parameter deformations of geodesic flows on hypersurfaces in $\mathbb{R}^{n+1}$, with moment of inertia serving as the deformation parameter.

\subsection{Preliminaries}
By a {\em rolling system} we mean a dynamical system of geometric/mechanical nature consisting of the following elements: a submanifold $P$ of $\mathbb{R}^{n+1}$ possibly with boundary and  corners,
and  arbitrary codimension; an $(n+1)$-dimensional  ball of radius $r$ which rolls over $P$ without slipping\----according to definition to be given shortly\----and may roll over the edge upon reaching the boundary of $P$.
We suppose  the ball has rotationally symmetric mass distribution with the moment of inertia parameter $\gamma$  defined in our discussion of no-slip billiard systems. We further suppose  $r$ to be sufficiently  small, and the boundary of $P$ regular enough, that the boundary $\mathcal{N}=\mathcal{N}_r$ of the 
tubular neighborhood of $P$\----the set of points in $\mathbb{R}^{n+1}$ at a distance less than or equal to $r$ from $P$\----is a piecewise smooth, everywhere differentiable hypersurface. In other words, the unit normal vector field $\nu$ to $\mathcal{N}$, pointing away from $P$, is everywhere continuous and piecewise smooth. Thus it is possible that
the principal curvatures of $\mathcal{N}$ will have jump discontinuities near the boundary of $P$. (See Figure \ref{ball_on_surface}.) Figure \ref{semi_infinite} shows the rolling over the (single point) edge where $P$ is the (codimension-$2$) half-infinite line in $\mathbb{R}^3$.

 The configuration manifold of the rolling ball is the product $SO(n+1)\times\mathcal{N}\subseteq SE(n+1)$. A point$(A,a)$  in this space represents the Euclidean motion mapping the ball
 of radius $r$ centered at $0\in \mathbb{R}^{n+1}$ to the ball centered at $a\in \mathcal{N}$ under the function $x\mapsto Ax+a$.
Due to rotational symmetry, we only need  the position of the ball's center $a\in \mathcal{N}$ at each moment in time in order to specify its configuration. The ball's kinetic state   is given 
by the velocity $v\in T_a\mathcal{N}$ of the ball's center and the angular velocity $U\in \mathfrak{so}(n+1)$.  The condition of no-slip rolling is
\begin{equation}\label{NH_constraint}v=  rU \nu(a). \end{equation}
This is the nonholonomic constraint on the ball's motion asserting that the point on the surface of the ball in contact with $P$ has zero velocity. 

 \begin{figure}[htbp]
\begin{center}
\includegraphics[width=2.5in]{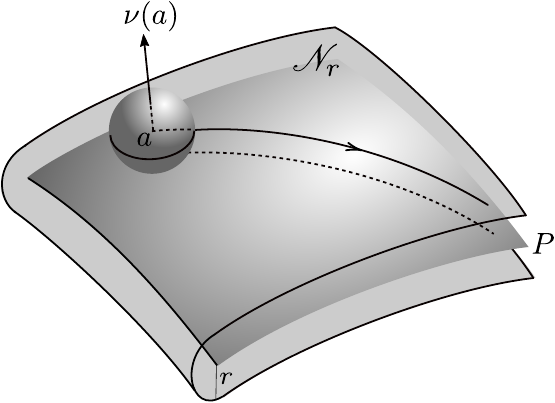}\ \ 
\caption{{\small   $P$ is a smooth submanifold of $\mathbb{R}^{n+1}$ possibly with boundary and corners, and  arbitrary codimension, on which a ball of radius $r$ can roll. The locus of centers of the rolling ball is the  hypersurface $\mathcal{N}=\mathcal{N}_r$, which  is the boundary of an $r$-tubular neighborhood  of $P$. The unit normal vector field to $\mathcal{N}$ is denoted $\nu$.}}
\label{ball_on_surface}
\end{center}
\end{figure} 

As was done for the no-slip billiard ball, we prefer to work not with the actual angular velocity matrix $U$, whose entries have physical dimension $[1/\text{time}]$, but with the matrix $r\gamma U$, whose entries have physical dimension $[\text{velocity}]$. We define
$$S:=r\gamma \Pi_a U \Pi_a \in \mathfrak{so}(T_a\mathcal{N}), $$
where $\Pi_a$ is the orthogonal projection from $\mathbb{R}^{n+1}$ to $T_a\mathcal{N}$ and $\mathfrak{so}(T_a\mathcal{N})$ is
the Lie algebra of skew-symmetric linear maps from $T_a\mathcal{N}$ to itself relative to the standard (dot product) Riemannian metric  on $\mathcal{N}$.
It turns out that the kinetic state of the rolling ball is fully specified
by  $a$ (for the configuration),  $S$ and $v$. In fact, it is not difficult to show (see \cite{CFZ21}) that 
$$U=\Pi_a U\Pi_a +\nu_a\wedge U\nu_a $$
which, together with the constraint equation  (\ref{NH_constraint}), gives
$$r\gamma U = S  +\gamma \nu_a\wedge v. $$
We call $S$ the {\em tangential spin} of the rolling ball. Thus it is natural to look for the equations of motion on a vector bundle $\mathcal{M}$ over $\mathcal{N}$ that extends the tangent bundle $T\mathcal{N}$ by incorporating the tangential spin. We   define 
\begin{equation}\label{direct_sum_M}\pi: \mathcal{M}:= \mathfrak{so}(\mathcal{N}) \oplus T\mathcal{N}\rightarrow \mathcal{N}, \end{equation}
whose fiber over $a\in \mathcal{N}$ is $\mathfrak{so}(T_a\mathcal{N}) \oplus T_a\mathcal{N} $. Sections of $\mathcal{M}$    consist of pairs $(S, X)$ where $X$ is a vector field on $\mathcal{N}$ and $S$ is a field of skew-symmetric endomorphisms on tangent spaces of $\mathcal{N}$.
There is on $\mathcal{M}$  a bundle Riemannian metric
\begin{equation}\label{metricM}\langle \xi,\zeta \rangle_a = m\left\{\frac12 \text{Tr}\left(S_\xi S_\zeta^\intercal\right)+ v_\xi\cdot v_\zeta\right\},\end{equation}
which is naturally associated to the moving ball's kinetic energy: 
$$E_a(\xi)=\frac12\|\xi\|^2:=\frac{1}2 m\left\{\frac12 \text{Tr}\left(SS^\intercal\right)+ |v|^2\right\}, $$
for a state $\xi=(S,v)$ at $a$. Note that the direct sum in (\ref{direct_sum_M}) is orthogonal.

When $n=2$,   $\mathfrak{so}(\mathcal{N})$ is  a trivial line bundle with global section defined at each   $a$ by
the map, denoted $J_a$,  that performs counterclockwise  rotation   by $\pi/2$ on $T_a\mathcal{N}$. In this case, $\mathcal{M}\cong \mathbb{R}\times T\mathcal{N}$ and the tangential spin
reduces to  a scalar quantity $s$.

\subsection{A connection on $\mathcal{M}$}\label{connection}
Elements of the  Euclidean group $SE(n+1)$ have   been written as pairs  $(A,a)$, acting on points $x\in B_r(0)$ (the ball of radius $r$ centered at the origin) by affine transformations $x\mapsto Ax+a$. Thus it was natural to also use   $a$   to designate   points in $\mathcal{N}$.
From now on, as the group and its Lie algebra  take a back seat and we focus attention on 
   $\mathcal{N}$,    points on this manifold will, typically,  be indicated by $x$.

We now introduce a connection  on $\mathcal{M}$ as follows. Recall the  $\wedge$ product (generalizing the cross-product of vector calculus)
 $$\wedge: T_x\mathcal{N}\times T_x\mathcal{N}\rightarrow \mathfrak{so}(T_x\mathcal{N}).$$
 The Riemannian metric induced on $\mathcal{N}$ by restriction of  the dot product in Euclidean space will, when convenient, be written as
 $\langle X,Y\rangle_x$ rather than $X(x)\cdot Y(x)$. 
 \begin{proposition} The Levi-Civita connection on $T\mathcal{N}$ naturally  induces a connection $\nabla$ on $\mathcal{M}$. This connection is metric
 relative to the Riemannian metric (\ref{metricM}) and satisfies the product rule
 \begin{equation}\label{prod_rule_wedge}\nabla_u X\wedge Y=(\nabla_uX)\wedge Y +X\wedge\nabla_uY,\end{equation}
where $X, Y$ are smooth vector fields on $\mathcal{N}$.
 \end{proposition}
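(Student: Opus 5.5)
We define $\nabla$ on each summand of $\mathcal{M}=\mathfrak{so}(\mathcal{N})\oplus T\mathcal{N}$ and then take the direct sum. On $T\mathcal{N}$ we use the Levi-Civita connection $\nabla$ of the induced metric on $\mathcal{N}$, so $\nabla_uX=\Pi_x(DX\,u)$, where $D$ is the ordinary derivative in $\mathbb{R}^{n+1}$ and $\Pi_x$ is the tangential projection. On the endomorphism bundle we use the induced connection
$$(\nabla_u S)X:=\nabla_u(SX)-S\nabla_uX,$$
for a section $S$ of $\mathfrak{so}(\mathcal{N})$ and a vector field $X$. This is the standard connection on $\text{End}(T\mathcal{N})\cong T\mathcal{N}\otimes T^*\mathcal{N}$. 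Since $\mathfrak{so}(\mathcal{N})$ is a subbundle of $\text{End}(T\mathcal{N})$, we must show it is preserved. We then set $\nabla_u(S,X):=(\nabla_uS,\nabla_uX)$.

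The key steps are as follows.

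\textbf{Tensoriality and the Leibniz rule.} Check that $(\nabla_uS)X$ is $C^\infty$-linear in $X$. This follows from the Leibniz rule for the Levi-Civita connection, since the $u(f)SX$ terms cancel. Hence $\nabla_uS$ is an endomorphism, and the usual connection axioms for $S$ are inherited.

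\textbf{Skew-symmetry is preserved.} For vector fields $X,Y$, differentiate $\langle SX,Y\rangle+\langle X,SY\rangle=0$ along $u$ using metric compatibility of the Levi-Civita connection. After cancelling terms this gives $\langle(\nabla_uS)X,Y\rangle+\langle X,(\nabla_uS)Y\rangle=0$. Hence $\nabla_uS\in\mathfrak{so}(T_x\mathcal{N})$.

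\textbf{Metric compatibility for (\ref{metricM}).} The velocity part follows from metric compatibility of the Levi-Civita connection. For the spin part, write $\frac12\text{Tr}(S_1S_2^\intercal)=\frac12\sum_i\langle S_1E_i,S_2E_i\rangle$ in a local orthonormal frame $E_i$. Differentiate along $u$ and express $\nabla_uE_i=\sum_j\omega_{ij}(u)E_j$ with $\omega$ skew. The terms involving $\omega$ then cancel: this amounts to $\text{Tr}$ being invariant under conjugation by the infinitesimal rotation $\omega(u)$. The result is $u\,\frac12\text{Tr}(S_1S_2^\intercal)=\frac12\text{Tr}((\nabla_uS_1)S_2^\intercal)+\frac12\text{Tr}(S_1(\nabla_uS_2)^\intercal)$. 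An alternative is to note that $\nabla$ on $\text{End}$ is compatible with the tensor-product metric, which restricts to $\text{Tr}(AB^\intercal)$.

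\textbf{Product rule (\ref{prod_rule_wedge}).} Apply $(\nabla_u(X\wedge Y))Z=\nabla_u\big((X\wedge Y)Z\big)-(X\wedge Y)\nabla_uZ$, with
$$(X\wedge Y)Z=\langle X,Z\rangle Y-\langle Y,Z\rangle X.$$
Expand using the Leibniz rule and metric compatibility. The terms containing $\nabla_uZ$ cancel, and the remaining terms regroup exactly as $((\nabla_uX)\wedge Y)Z+(X\wedge\nabla_uY)Z$.

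None of these steps is deep. The only point requiring care is the trace computation in metric compatibility, which I would handle through the orthonormal-frame argument above.
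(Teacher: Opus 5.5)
Your proposal is correct and takes essentially the paper's route. You extend the Levi-Civita connection to endomorphisms as a tensor connection. You prove metric compatibility in an orthonormal frame, where the skew connection coefficients pair against a symmetric expression and drop out. For the product rule, the paper cites $X\wedge Y = Y\otimes X^\flat - X\otimes Y^\flat$, and your expansion on a test vector $Z$ is that same computation written out. Your explicit check that $\nabla_u S$ stays skew-symmetric, so that $\nabla$ really lands in $\mathfrak{so}(\mathcal{N})$, is a step the paper leaves implicit, and it is worth including.
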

 \begin{proof}
The  Levi-Civita connection $\nabla$ naturally extends from vector to general tensor fields on $\mathcal{N}$ and, in particular, to fields of endomorphisms, or $(1,1)$-tensor fields. 
Under this extension, $\nabla$ satisfies the product rule with respect to tensor product and the pairing between vectors and covectors. Thus the product rule (\ref{prod_rule_wedge}) holds since
$$X\wedge Y = Y\otimes X^\flat-X\otimes Y^\flat, $$
where $X^\flat:=\langle X, \cdot\rangle$ is the covector field associated to $X$ via the Riemannian matrix on $\mathcal{N}$. Proving that $\nabla$ is Riemannian on $\mathcal{M}$ only requires checking that
$$X\text{Tr}\left(S_1 S_2\right)= \text{Tr}\left((\nabla_X S_1)S_2\right) +  \text{Tr}\left(S_1(\nabla_X S_2)\right). $$
(The transpose has been dropped since $S^\intercal=-S$.) This can be verified as follows. Let $E_1, \dots, E_n$ be a local orthonormal frame of vector fields on $\mathcal{N}$.  Then
$$\text{Tr}\left(S_1 S_2\right)=\sum_{j} \langle E_j, S_1 S_2 E_j\rangle = -\sum_j\langle S_1E_j, S_2 E_j\rangle.  $$
Now $S_i E_j$ is a local vector field on $\mathcal{N}$ and (from general properties of the covariant derivative on general tensor fields, in particular the 
product rule for contractions)
$$ \nabla_X (S_i E_j)=(\nabla_X S_i)E_j + S_i(\nabla_X E_j).$$
This implies
\begin{align*}-X\langle S_1 E_j, S_2 E_j\rangle&= \left\langle S_2 (\nabla_X S_1)E_j, E_j\right\rangle +  \left\langle S_1 (\nabla_X S_2)E_j, E_j\right\rangle \\ 
&\ \ \ \ \ \ \ \ \ \ \ \ \ \ \ \ \ \ \ \ + \langle S_1\nabla_X E_j, S_2 E_j\rangle + \langle S_1 E_j, S_2 \nabla_X E_j\rangle.
\end{align*}
Summing over $j$,
\begin{align*}X\text{Tr}(S_1S_2)&= \text{Tr}((\nabla_XS_1)S_2) +  \text{Tr}(S_1(\nabla_XS_2))\\
& \ \ \ \ \ \ \ \ \ \ \ \ \  + \sum_j \left( \langle S_1\nabla_X E_j, S_2 E_j\rangle + \langle S_1 E_j, S_2 \nabla_X E_j\rangle\right). 
\end{align*}
It remains to show that the sum in the previous identity vanishes. Substituting
$$\nabla_X E_j= \sum_k \langle E_k, \nabla_X E_j\rangle E_k$$ into that sum (indicated by $\mathcal{I}$) results in
$$\mathcal{I}=\sum_{j,k} \langle E_k, \nabla_X E_j\rangle \left(\langle S_1 E_k, S_2E_j\rangle +\langle S_1 E_j, S_2E_k\rangle\right).$$
But it is now apparent that $\mathcal{I}$ is the trace of the product of a skew-symmetric matrix and a symmetric matrix, which is $0$. 
 \end{proof}
 
 \subsection{The free rolling equations}\label{equations_M_section}
The {\em shape operator} of the hypersurface $\mathcal{N}$ is defined at each $x\in \mathcal{N}$ by the symmetric linear map 
$$\mathbb{S}_x: u\in T_x\mathcal{N}\mapsto \mathbb{S}_xu:=-D_u\nu\in T_x\mathcal{N},$$
in which $D_u \nu$ is the directional derivative of the unit normal vector field $\nu$ in the direction of $u\in T_x\mathcal{N}$. 
The eigenvectors of $\mathbb{S}_x$ are the {\em principal directions} at $x$, and the associated eigenvalues are the {\em principal curvatures} of $\mathcal{N}$ at $x$. 

We can now describe the equations of motion for a ball with rotationally symmetric mass distribution and moment of inertia parameter $\gamma$ in a form that
does not require explicit use of the nonholonomic constraint equation.  It will be convenient to introduce yet another proxy for the moment of inertia: 
\begin{equation}\label{eta_gamma}\eta:=\frac{\gamma}{\sqrt{1+\gamma^2}}.\end{equation}
The following theorem is from \cite{CFZ21}.
\begin{theorem}\label{rolling_theorem}
The motion of a rolling ball  of radius $r$ on the  submanifold $P\subseteq \mathbb{R}^{n+1}$ with moment of inertia parameter $\eta$ is given by a curve
$x(t)\in \mathcal{N}_r$ and a differentiable section $(S(t), v(t))$ of $\mathcal{M}$ such that $v(t)=x'(t)$ and
$$\frac{\nabla v}{dt}=-\eta S\mathbb{S}_{x} v, \ \ \ \frac{\nabla S}{dt} = \eta (\mathbb{S}_x v)\wedge v.$$
\end{theorem}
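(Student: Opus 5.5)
We derive the equations from the Lagrange--d'Alembert principle: Newton--Euler equations for the rigid ball, with the contact reaction force as the only external force, followed by elimination of that force using the constraint (\ref{NH_constraint}).

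\emph{Step 1: Newton--Euler equations.} Let $F$ be the reaction force, acting at the contact point $x-r\nu_x$. The linear momentum equation is $m\dot v=F$. Because the mass distribution is rotationally invariant, the inertia term in the angular momentum equation reduces to $m\lambda\dot U$. The torque about the center is $(-r\nu)\wedge F$, up to the sign convention for $\wedge$. Writing $\lambda=(r\gamma)^2/2$, we obtain
$$\frac{(r\gamma)^2}{2}\,\dot U\ \propto\ r\,\nu\wedge \dot v,$$
with the constant and sign fixed by the convention for $\wedge$. This is a relation in $\mathfrak{so}(n+1)$.

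\emph{Step 2: substitute the constraint.} From the decomposition stated before the theorem, $r\gamma U=S+\gamma\,\nu\wedge v$. Differentiate this identity along $x(t)$, using $\dot\nu=D_v\nu=-\mathbb{S}_xv$. This gives
$$r\gamma\dot U=\dot S-\gamma(\mathbb{S}v)\wedge v+\gamma\,\nu\wedge\dot v,$$
where $\dot S$ is the ambient derivative of $S$, regarded as an endomorphism of $\mathbb{R}^{n+1}$ that vanishes on $\nu$.

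Now decompose every term using $\mathfrak{so}(n+1)=\mathfrak{so}(T_x\mathcal N)\oplus \nu\wedge T_x\mathcal N$.
\begin{itemize}
\item The tangential part of $\dot S$ is $\nabla S/dt$. Its mixed part is obtained by differentiating $S\nu=0$, which gives $\dot S\nu=S\mathbb{S}v$. The $\nu\wedge$-component of $\dot S$ is therefore governed by $S\mathbb{S}v$.
\item Similarly $\dot v=\nabla v/dt+\langle\mathbb{S}v,v\rangle\nu$ (Gauss formula). Only the tangential part of $\dot v$ survives in $\nu\wedge\dot v$.
\end{itemize}
Projecting the angular momentum equation onto $\mathfrak{so}(T_x\mathcal N)$ gives $\nabla S/dt=\gamma\cdot(\text{const})\,(\mathbb{S}v)\wedge v$. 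Projecting onto $\nu\wedge T_x\mathcal N$ gives a linear relation of the form
$$(\alpha+\gamma^2)\,\frac{\nabla v}{dt}\ \propto\ -\gamma\, S\mathbb{S}v .$$

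\emph{Step 3: normalize.} Solve the two relations for $\nabla v/dt$ and $\nabla S/dt$. I expect the coefficients to combine into $\gamma/(1+\gamma^2)$ in one equation and $\gamma$ in the other. To reach the stated form with the common factor $\eta=\gamma/\sqrt{1+\gamma^2}$ for both, I will likely need to rescale $S$ by $\sqrt{1+\gamma^2}$, or equivalently to reread the definition of $S$ as used in \cite{CFZ21}. As a consistency check, the final system must conserve the energy $\tfrac12\|(S,v)\|^2$. Since $\nabla$ is metric on $\mathcal M$ by the preceding proposition, this amounts to
$$\tfrac12\operatorname{Tr}\!\big(S\,((\mathbb{S}v)\wedge v)^\intercal\big)=\langle S\mathbb{S}v,v\rangle ,$$
which follows from the definition of $\wedge$. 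This identity forces the two coefficients to coincide, and that fixes the normalization.

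\emph{Main obstacle.} The hard part is the bookkeeping in Step 2. The constant factors (the $\tfrac12$ from the trace in the metric, the sign convention of $\wedge$, and the scaling between $S$ and $r\gamma\,\Pi U\Pi$) must all be tracked consistently. The mixed component of $\dot S$ must also be correctly attributed: it comes from differentiating $S\nu=0$, not from the tangential projection. I would first verify the whole computation for $n=2$, with $P$ a plane and $P$ a sphere, against the classical results, and then carry out the general projection argument.
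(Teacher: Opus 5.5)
Your outline is correct. The paper gives no proof of this theorem; it quotes it from \cite{CFZ21}, and the remark before (\ref{motion_force}) indicates that the argument there also works by determining the rolling constraint force. So your Newton--Euler elimination is essentially the intended route.

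To fill in the constants: with the paper's convention $(u\wedge w)z=(u\cdot z)w-(w\cdot z)u$, the angular momentum is $\int y\wedge\dot y\,d\mu=2m\lambda U=m(r\gamma)^2U$. Euler's equation is therefore $(r\gamma)^2\dot U=-r\,\nu\wedge\dot v$. Inserting your Step~2 gives
$$\dot S-\gamma(\mathbb{S}v)\wedge v+\Big(\gamma+\frac1\gamma\Big)\nu\wedge\frac{\nabla v}{dt}=0,\qquad \dot S=\frac{\nabla S}{dt}+\nu\wedge S\mathbb{S}v .$$
Hence $\nabla S/dt=\gamma(\mathbb{S}v)\wedge v$ and $\nabla v/dt=-\frac{\gamma}{1+\gamma^2}S\mathbb{S}v$ for $S=r\gamma\Pi U\Pi$, exactly as you predicted. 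Replacing $S$ by $S/\sqrt{1+\gamma^2}=r\eta\,\Pi U\Pi$ gives the stated system.

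Two comments on Step~3.

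\emph{The rescaling is forced.} With $S=r\gamma\Pi U\Pi$ as literally defined in the paper, the kinetic energy is $\frac m2\big(\frac12\text{Tr}(SS^\intercal)+(1+\gamma^2)|v|^2\big)$. So the theorem holds, and (\ref{metricM}) equals the kinetic energy up to the factor $1+\gamma^2$, only for $S=r\eta\,\Pi U\Pi$. You have correctly spotted a normalization slip in the paper's setup.

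\emph{Your consistency check cannot certify the one constant that carries the content of the theorem.} Conservation of $\frac12\|(S,v)\|^2$ in the metric (\ref{metricM}) only requires the two coefficients to agree after rescaling $S$. A suitable rescaling exists whatever the value of $\alpha$ in $(\alpha+\gamma^2)$. For example, writing the inertia term as $m\lambda\dot U$ (as in your Step~1) instead of $2m\lambda\dot U$ gives $\alpha=2$ and the wrong parameter $\gamma/\sqrt{2+\gamma^2}$, and your check would accept it.

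You can pin down $\alpha=1$ in any of these ways:
\begin{itemize}
\item require conservation of the physical energy $\frac m2\big(\frac12\text{Tr}(SS^\intercal)+(1+\gamma^2)|v|^2\big)$ in the unrescaled variables;
\item use the torque--power identity $\frac12\text{Tr}\big((a\wedge b)U^\intercal\big)=Ua\cdot b$, which fixes the factor in front of $\dot U$;
\item compare with the classical fact that a uniform solid ball rolling inside a vertical circular cylinder oscillates vertically at $\sqrt{2/7}=\eta$ times the angular rate of its center about the axis.
\end{itemize}
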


Let us make a few observations about this set of equations.
\begin{enumerate}
\item When the moment of inertia parameter $\eta$ is zero, the two equations decouple and the system reduces to
$$\frac{\nabla x'}{dt} = 0, \ \ \ \frac{\nabla S}{dt}=0. $$
This means that the trajectories of the center of the rolling ball are geodesics in $\mathcal{N}$ and the tangential spin $S(t)$ is a parallel tensor field along those geodesics. This is reminiscent of frame flows on Riemannian manifolds, except that it is not a frame but some kind of frame spinning  that undergoes parallel transport.
Thus we can regard the above rolling equations as a one-parameter deformation of (a bundle extension of) the geodesic flow on the tangent bundle of $\mathcal{N}$.
See Figure \ref{semi_infinite}.

 \begin{figure}[htbp]
\begin{center}
\includegraphics[width=5.0in]{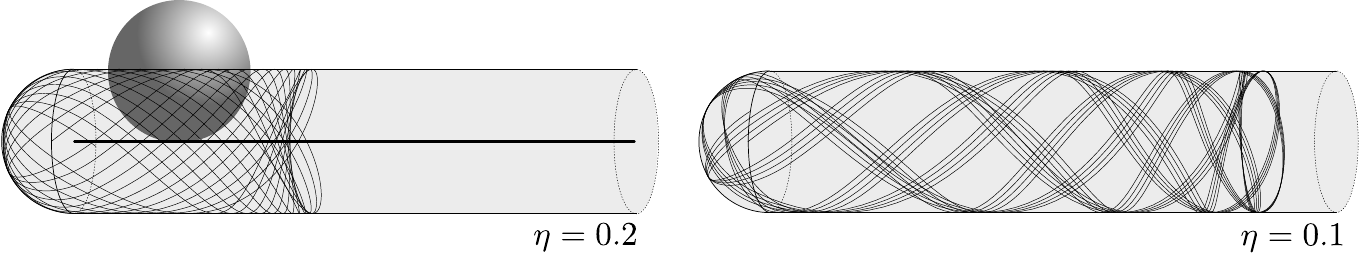}\ \ 
\caption{{\small A $3$-dimensional ball  rolling without slipping  on a half-infinite line $P$ in $\mathbb{R}^3$, showing the rolling at the boundary of $P$. As the moment of inertia parameter $\eta$ is reduced, trajectories approach geodesics on the boundary $\mathcal{N}_r$ of the $r$-tubular neighborhood of $P$. }}
\label{semi_infinite}
\end{center}
\end{figure} 
\item  The dynamical properties of the system depend strongly on the extrinsic curvatures of $\mathcal{N}$ via the shape operator. When $\mathbb{S}=0$, 
the center of the rolling ball follows uniform rectilinear motion (on the  flat $\mathcal{N}$) while $S(t)$ is parallel;  hence the components of $S(t)$ are  conserved quantities.
\item Let us define the  bundle map $f:\mathcal{M}_x\rightarrow \mathcal{M}_x$ at each $x\in \mathcal{N}$ (in which $\mathcal{M}_x$ indicates the fiber of $\mathcal{M}$ over $x$) by
$$f(S,v)= \left((\mathbb{S}_x v)\wedge v, -S\mathbb{S}_x v \right). $$
Then the rolling equations may be written as
$$\frac{\nabla \xi}{dt} = \eta f(\xi).  $$
It is an easy exercise to check that $\langle \xi, f(\xi)\rangle=0$, from which it follows that
$$\frac{d}{dt} E(\xi(t)) = \frac12 \frac{d}{dt}\langle \xi(t),\xi(t)\rangle =  \left\langle \xi(t), \frac{\nabla\xi}{dt}(t)\right\rangle =\langle \xi(t), f(\xi(t))\rangle=0.$$
This shows  conservation of energy for the rolling equation.
\item It is not difficult to verify that the rolling equations are time-reversible.
\item The vector bundle $\mathcal{M}$ can be identified with the tangent bundle of a Riemannian manifold $M$, as follows.   
Let  $\pi:M\rightarrow \mathcal{N}$ be the fiber bundle of linear isometries on tangent spaces of $\mathcal{N}$. By definition, the fiber $M_x$ over $x\in \mathcal{N}$
consists of  all the orientation preserving linear maps $A:T_x\mathcal{N}\rightarrow T_x\mathcal{N}$ such that $\langle Au, Av\rangle_x=\langle u,v\rangle_x$
for all $x\in \mathcal{N}$ and all $u, v\in T_x\mathcal{N}$. In other words, $M$ is a bundle of groups, the fiber over $x$ being the special orthogonal group
on the inner product vector space $(T_x\mathcal{N}, \langle \cdot, \cdot\rangle_x).$ We denote this group by $SO(\mathcal{N})_x$ and its Lie algebra
by $\mathfrak{so}(\mathcal{N})_x$. The tangent bundle of $M$ can be identified with $\mathcal{M}$ as follows. Let $e\in TM$ have base-point $A\in M$,
where $A\in SO(\mathcal{N})_x$. Let $t\mapsto A(t)\in M$ be a differentiable curve representing $e$ in the sense  that $A(0)=A$ and $A'(0)=e$.
We can then map $e$ to the pair $(S,u)$ where
$$u=\left.\frac{d}{dt}\right|_{t=0}\pi(A(t))\in T_x\mathcal{N}, \ \ S=r\gamma \left(\left.\frac{\nabla}{dt}\right|_{t=0} A(t)\right) A^{-1}\in \mathfrak{so}(\mathcal{N})_x. $$
Here $\nabla$ is the covariant derivative  obtained through the imbedding $M\subseteq T\mathcal{N}\otimes T^*\mathcal{N}$. Through the identification $TM\cong \mathcal{M}$, the manifold $M$ acquires a Riemannian metric and a metric connection $\nabla$. It is natural to ask whether  $\nabla$ is the Levi-Civita connection
on $M$, which  amounts to determining whether the torsion tensor of $\nabla$ vanishes. This is a question we have not yet addressed. 
\end{enumerate}
\subsection{Rolling under external forces}
We briefly remark on how external forces can be incorporated into the rolling equations of Theorem \ref{rolling_theorem}. Let $\varphi{x}$ be a force (per unit mass) vector field
on $\mathbb{R}^{n+1}$. This means that $\varphi(Ax+a)d\mu(x)$ is the force vector acting on an element of mass $d\mu(x)$ of the ball in configuration $g=(A,a)$, where
$x$ is here a point on the ball $B_r(0)$ of radius $r$ centered at the origin. We obtain a linear functional on the tangent space at $g$ to  the configuration manifold
$SO(n+1)\times \mathcal{N}$ of the rolling ball:
$$\hat{\varphi}_g(\xi):=\int_{B_r(0)} V_\xi(x)\cdot\varphi(Ax+a)\, d\mu(x).$$
Recall that $V_\xi(x)\in \mathbb{R}^{n+1}$ denotes the velocity of the material point $x\in B_r(0)$ in configuration $g$ and kinetic state $\xi\in T_g(SO(n+1)\times \mathcal{N})$. We call $\hat{\varphi}$ the {\em power functional}. Under the assumption that $\mu$ is rotationally symmetric, we obtain (using the trace inner product and
notations from Section \ref{generalities})
$$ \hat{\varphi}_g(\xi)=\left\langle \left(U_\xi, u_\xi\right), \left(\Psi(a), \psi(a)\right) \right\rangle_g,$$
where  
$$\Psi(a):=\frac1{(r\gamma)^2} \int_{B_{r}(0)} x\wedge \varphi(x+a)\, \frac{d\mu(x)}{m},
\ \  \psi(a):=
 \int_{B_{r}(0)}   \varphi(x+a)\, \frac{d\mu(x)}{m}.
$$
Going through the argument from \cite{CFZ21} used to derive the free equations of motion   of Theorem \ref{rolling_theorem}, but now taking account of the external forces, which affect the
determination of the rolling constraint force, one obtains (after switching once again the roles of $a$ and $x$; see the beginning of Section \ref{connection}) the equations
\begin{equation}\label{motion_force}\frac{\nabla v}{dt}=-\eta S\mathbb{S}_{x} v+   r\eta^2 \Psi \nu + \frac1{1+\gamma^2}\Pi_x\psi, \ \ \ \frac{\nabla S}{dt} = \eta (\mathbb{S}_x v)\wedge v +{r\eta}\Pi_x\Psi\Pi_x.\end{equation}
Here $\Pi=\Pi_x$ is the orthogonal projection map to $T_x\mathcal{N}$. Of interest in this paper is the case of a constant $\varphi$, for which 
$\Psi=0$ and $\psi=\varphi$. 

\section{No-slip billiards as limits of rolling systems}
We can finally indicate the connection between no-slip billiards and rolling systems. The key remark is based on determining the outcome  of rolling around a straight edge. 
Thus suppose that $P$ is the  (codimension-$1$) half-space $\mathbb{R}^{n-1}\times [0,\infty)$ in $\mathbb{R}^{n+1}$, the boundary of which is  $\mathbb{R}^{n-1}$.
Rolling around the edge then corresponds to the rolling of an $(n+1)$-dimensional ball on $\mathbb{R}^{n-1}$. The curved part of the hypersurface $\mathcal{N}_r$ is
part of the boundary of the tubular neighborhood of $\partial P$, which is
is $\mathbb{R}^{n-1}\times S^1(r)$, where $S^1(r)$ is the circle of radius $r$ on the plane perpendicular to $\mathbb{R}^{n-1}$ in $\mathbb{R}^{n+1}$. See
Figure \ref{edge}.

In order to solve the rolling equations in this setting (Theorem \ref{rolling_theorem}), it is convenient to express $v$ and $S$ in the parallel orthonormal frame described in
Figure \ref{edge}. Thus we write
$$v=\sum_i v_i E_i, \ \ S=\frac12\sum_{i,j} S_{ji} E_i\wedge E_j, \ \ S_{ji}:=E_i\cdot (S E_j). $$
Since the $\nabla_vE_i=$ are parallel and $\mathbb{S}_xE_i=-\frac1r\delta_{ni}E_i$ (on the curved part of $\mathcal{N}$), we have
$$\frac{\nabla v}{dt} =\sum_i\dot{v}_i E_i = -\eta S\mathbb{S}_xv =\frac{\eta v_n}{r} SE_n,$$  
where $\dot{v}_i$ is derivative in the time variable. Taking the inner product with $E_i$ gives
$$\dot{v}_n=0, \ \ \dot{v}_i= \frac{\eta v_n}{r} S_{ni} \text{ for } i<n.  $$
Note that $v_n$ is constant because $S_{nn}=0$.  Similarly, (as the $E_i$ are parallel)
$$\dot{S}_{ij}=E_i\cdot \left(\frac{\nabla S}{dt} E_j\right) = \eta E_i\cdot\left((\mathbb{S}_x v) \wedge v E_j\right)=-\frac{\eta}{r} E_i\cdot(E_n\wedge v E_j) =-\frac{\eta}{r}(\delta_{nj} v_i - \delta_{ni}v_j).$$
Thus we are left with the elementary problem of  solving  the system
$$\dot{v}_n=0, \ \  \dot{S}_{ij}=0 \text{ for } i,j<n, \ \ \dot{S}_{ni} =-\frac{\eta v_n}{r} v_i   \text{ and }  \dot{v}_i =\frac{\eta v_n}{r} S_{ni}  \text{ for } i<n.$$
Let us solve for the velocities immediately after leaving the round edge as a function of the velocities immediately before entering it. 

 \begin{figure}[htbp]
\begin{center}
\includegraphics[width=2.2in]{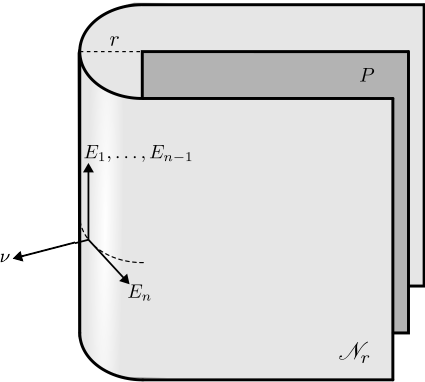}\ \ 
\caption{{\small  The unit vector fields $E_1, \dots, E_n$ form a parallel orthonormal frame on $\mathcal{N}$ consisting of principal directions. $E_n$
is associated to principal curvature $-1/r$ whereas the principal curvatures for the other vectors are $0$. }}
\label{edge}
\end{center}
\end{figure} 

We define $$\overline{v}:=\sum_{i=1}^{n-1} (E_i\cdot v) E_i, \ \ 
  W:=SE_n.$$ Note that both $\overline{v}$ and $W$ lie in    $\mathbb{R}^{n-1}$.
These are the velocity components that do not remain constant in the process of rounding the edge of the flat plate $P$. The time it takes to go around the edge
is $T=\pi r/v_n$. Integrating from $0$ to $T$ the system
$$\frac{d}{dt}\left(\begin{array}{c}\overline{v} \\W\end{array}\right) =\frac{\eta \pi}{T}\left(\begin{array}{cc}0 & I \\\!\!-I & 0\end{array}\right) \left(\begin{array}{c}\overline{v} \\W\end{array}\right)$$
gives
\begin{equation}\label{map_rolling_edge}\left(\begin{array}{c}\overline{v}(T) \\W(T)\end{array}\right) =\left(\begin{array}{rr}\cos(\pi \eta)I & -\sin(\pi \eta)I \\\sin(\pi\eta)I & \cos(\pi\eta)I\end{array}\right) \left(\begin{array}{c}\overline{v}(0) \\W(0)\end{array}\right). \end{equation}
Let us now change our perspective somewhat and imagine looking at the folded hypersurface of Figure \ref{edge} from above and seeing its two sheets connected by the half-cylinder as one. From this perspective, the rolling around the edge is perceived as a soft collision with a flat wall. We rewrite the transformation of Equation (\ref{map_rolling_edge}) accordingly, as follows.
The vector field $E_n$ is viewed as pointing out on the edge ($\cong \mathbb{R}^{n-1}$) of the lower flat part of $\mathcal{N}$, and pointing in at the edge of the upper flat part. If we now identify both edges and replace $E_n$ there  with the unit outward pointing normal vector field $\mathbbm{n}$, redefine $W$ as $S\mathbbm{n}$,  and
introduce $\hat{v}:=v\cdot\mathbbm{n}$, the transformation (\ref{map_rolling_edge}) becomes
$$\left(\begin{array}{c}\hat{v} \\\overline{v} \\W\end{array}\right)\mapsto \left(\begin{array}{ccc}-1 & 0 & 0 \\0 & \cos(\pi\eta)I &  \sin(\pi\eta)I  \\0 &  \sin(\pi\eta)I   &\!\!\!\! - \cos(\pi\eta)I \end{array}\right)\left(\begin{array}{c}\hat{v} \\\overline{v} \\W\end{array}\right). $$
This expression should be compared with the corresponding map for the no-slip billiard collision, Equation (\ref{collision_map_bis}).
They become identical if we equate the two moment of inertia parameters
$ \beta = \pi \eta.$ Returning to the more basic parameter $\gamma$, and writing $\gamma_{n}$ and $\gamma_{n+1}$ for those parameters in dimensions $n$ and $n+1$, we have
\begin{equation}\label{compare_gamma}\frac{1-\gamma_n^2}{1+\gamma_n^2} =\cos\left( \frac{\gamma_{n+1}}{\sqrt{1+\gamma_{n+1}^2}}\pi\right).\end{equation}
 When $r$ is very small, so that the time the ball spends in contact with the edge of $P$ can be neglected, and if the moment of inertia parameters are
  related as indicated above, then the
 rolling-around-the-edge map in dimension $n+1$ becomes indistinguishable from the
  no-slip collision map for the no-slip billiard system in dimension $n$. 
 If the edge of $P$ is curved, the determination of the outgoing velocities away from the edge of $P$ as a function of the incoming velocities will be much more difficult, as will be seen in later examples. But it can be expected that, as the radius of the ball approaches $0$, the principal curvatures of the edge of $P$ as perceived by the rolling ball become negligible, and the flat approximation can be used.  This is the key idea behind the proof of the following theorem from \cite{CFZ21}.

\begin{theorem}[\cite{CFZ21}]\label{main}
Let $P$ be a domain in $\mathbb{R}^n\cong \mathbb{R}^n\times\{0\}\subseteq\mathbb{R}^{n+1}$ with smooth boundary $P_0$ whose principal curvatures (as a hypersurface in $P$) are uniformly bounded.
Consider an $(n+1)$-dimensional ball with spherically  symmetric mass distribution constrained to roll without slip on $P$. We assume no forces other than 
those enforcing the constraint. 
  In the limit
when the radius of the ball approaches $0$, solutions of the rolling ball equation have the following description: On $P\setminus P_0$ the point-mass moves with constant velocities $v$ and $S$ (this is also true for positive radius);  upon reaching a boundary point $x\in P_0$ with unit normal vector $\mathbbm{n}(x)$, the vector $(v, W)\in T_xP\oplus T_xP_0$, with $W=S \mathbbm{n}(x)$,   undergoes a reflection according to the no-slip  collision map $C_x$ as described in Equation (\ref{collision_S}).
The moment of inertia parameters $\gamma_{\text{\tiny no-slip}}$ (going into $C_x$) and $\gamma_{\text{\tiny roll}}$ (defining $\eta$ in the rolling equations), bothindependent of the ball radius, are related to each other by
Equation (\ref{compare_gamma}).
\end{theorem}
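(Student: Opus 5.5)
The plan is to reduce the statement to the flat-edge computation leading to Equation (\ref{map_rolling_edge}), and then control the error introduced by the curvature of $P_0$ as $r\to 0$.

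\textbf{Step 1: the flat part.} On the flat sheets of $\mathcal{N}_r$ (the two parallel copies of $P\setminus P_0$ at heights $\pm r$), the shape operator vanishes. Theorem \ref{rolling_theorem} then gives $\nabla v/dt=0$ and $\nabla S/dt=0$, so $v$ and $S$ are constant for every $r>0$. Projecting both sheets onto $P$ yields straight-line motion of the point mass, which settles the first assertion.

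\textbf{Step 2: the edge region.} The curved part of $\mathcal{N}_r$ is the half-tube $\{y+r(\cos\theta\, \mathbbm{n}(y)+\sin\theta\, e_{n+1}) : y\in P_0,\ \theta\in[0,\pi]\}$. I will write its shape operator in the principal frame $E_1,\dots,E_{n-1}$ (tangent to $P_0$) together with $E_n=\partial_\theta/r$. In direction $E_n$ the principal curvature is $-1/r$. In the directions tangent to $P_0$, the principal curvatures are $\kappa_i\cos\theta/(1-r\kappa_i\cos\theta)$, where the $\kappa_i$ are the principal curvatures of $P_0$. These are $O(1)$ uniformly, by the curvature bound. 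The frame is no longer parallel: $\nabla E_i$ along the curve has connection coefficients of size $O(\kappa)$ times the displacement.

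I then write the rolling equations in this frame as a perturbation of the flat system used for (\ref{map_rolling_edge}):
$$\dot v_n=O(|v|^2),\qquad \frac{d}{dt}\begin{pmatrix}\overline v\\ W\end{pmatrix}=\frac{\eta v_n}{r}\begin{pmatrix}0&I\\-I&0\end{pmatrix}\begin{pmatrix}\overline v\\ W\end{pmatrix}+O(|\xi|^2),$$
where the remainders are bounded in terms of the curvature bound and the energy. Energy is conserved, so $|\xi|$ is bounded throughout.

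\textbf{Step 3: the transit estimate (the main obstacle).} The transit time across the edge is $T\approx \pi r/v_n$. It is short only if $v_n$ is not small, so I must separate near-grazing arrivals. For incoming normal speed $\hat v\ge\delta$, I will rescale time by $s=t v_n/r$. The edge transit then becomes an ODE on $s\in[0,\pi]$, with leading part independent of $r$ and perturbation $O(r)$. Gr\"onwall's inequality then gives:
\begin{itemize}
\item the exit state equals the map (\ref{map_rolling_edge}) (with $E_n$ reinterpreted as $\pm\mathbbm{n}$) up to $O(r/\delta)$;
\item the exit point lies within $O(r)$ of the entry point.
\end{itemize}
Identifying the two sheets, the outgoing state is therefore given by the no-slip map $C_x$ of (\ref{collision_map_bis}) with $\beta=\pi\eta$, which is exactly the relation (\ref{compare_gamma}).

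Grazing arrivals are the delicate part. I will handle them in one of two ways:
\begin{itemize}
\item Show that the rolling is continuous near $\hat v=0$ and that $C_x$ is the identity on grazing states, so both sides tend to the same limit.
\item Alternatively, restrict the convergence to trajectory segments whose collisions are non-grazing. This is a full-measure set and is in line with the usual billiard convention.
\end{itemize}
There is also a possible failure of $v_n$ to stay positive during transit, i.e.\ the ball turning back on the edge. The $O(r)$ bound on $\dot v_n$ in rescaled time rules this out when $\hat v\ge\delta$ and $r$ is small.

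\textbf{Step 4: assembly.} Given a finite time interval with finitely many non-grazing collisions, I will concatenate the flat segments of Step 1 with the edge estimates of Step 3. The errors accumulate only additively over finitely many collisions, so the rolling trajectory converges uniformly, away from the collision instants, to the no-slip billiard trajectory, with $\gamma_{\text{\tiny no-slip}}$ and $\gamma_{\text{\tiny roll}}$ related by (\ref{compare_gamma}).
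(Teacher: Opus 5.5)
Your route matches what the paper indicates. The paper does not reprove Theorem \ref{main}: it derives the straight-edge map (\ref{map_rolling_edge}), argues heuristically that the curvature of $P_0$ becomes negligible to the ball as $r\to 0$, and cites \cite{CFZ21} for the details. Your perturbation of that computation in the angle variable, using Gr\"onwall on $[0,\pi]$ and the bound $\hat v\ge\delta$ to keep $v_n$ positive, is a reasonable way to make this precise for non-grazing arrivals. There is one genuine error, in your first option for grazing arrivals.

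\textbf{The first grazing option fails.} The map $C_x$ is not the identity on grazing states. By (\ref{collision_map_bis}), when $\hat u=0$ it still sends $(\overline{u},W)$ to $(c_\beta\overline{u}+s_\beta W,\ s_\beta\overline{u}-c_\beta W)$. Its only fixed vectors are those with $W=\tan(\beta/2)\,\overline{u}=\gamma_{\text{\tiny no-slip}}\overline{u}$, which together with $\hat u=0$ is exactly the rolling subspace $\mathfrak{R}_x$. The rolling model does not supply the continuity you hope for either. For a straight edge, $v_n$ is conserved, so every arrival with $\hat v>0$ eventually exits with exactly the rotation by $\pi\eta$; the only effect of small $\hat v$ is that this happens after the long transit time $\pi r/\hat v$. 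Arrivals with $\hat v\le 0$ never enter the curved part. For curved $P_0$ the ball may even roll back onto the sheet it came from (the ``friendly roll'' the paper mentions). Consequently the limiting dynamics gives an $O(1)$ kick to near-grazing trajectories that touch $P_0$ and none to those that miss it. The convergence cannot be uniform near $\hat v=0$, since the transit is short only when $\hat v\gg r$. You must therefore take your second option and state convergence only for trajectory segments all of whose collisions are non-grazing; that is how the theorem has to be read.

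\textbf{Step 4 needs a correction.} The errors do not accumulate merely additively. The collision point, and hence $\mathbbm{n}(x)$, depends on the incoming state, and so does the subsequent flight. An $O(r)$ discrepancy after one collision is therefore carried through all later flights and collision maps and amplified by their Lipschitz constants. Those constants are finite precisely because every collision in the interval is non-grazing. The conclusion survives, but this is a second place where the non-grazing hypothesis is used, and you should invoke it explicitly there.
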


In the presence of   external forces, the segments of  trajectories on the flat part of $P$ will, naturally, no longer follow a uniform rectilinear motion, but such forces
do not affect what happens at the boundary in the limit $r\rightarrow 0$. 

\begin{definition}[Nonholonomic billiard system]
By a {\em nonholonomic billiard system} we mean a rolling system on a flat plate $P$ as in Theorem \ref{main}, for a ball with positive radius, rotationally symmetric mass distribution, with or without external forces. 
\end{definition}

We may think of nonholonomic billiard systems as soft versions of no-slip billiards.

 \subsection{Rolling on vertical strips under gravity}\label{vertial_strip_g}
 We are now in a position to begin addressing  the main point of this paper, which is to use the nonholonomic billiards, governed by systems of ordinary differential equations, as a tool to  help illuminate and explain some of the observed behavior of no-slip billiards, in particular the bounded trajectories on cylinders under gravity. 
  This is a project we only begin here; it will be  systematically developed elsewhere.
 In this section, we consider the case of a no-slip disc bouncing between two parallel vertical lines (Figure \ref{plates}).
 
  \begin{figure}[htbp]
\begin{center}
\includegraphics[width=1.5in]{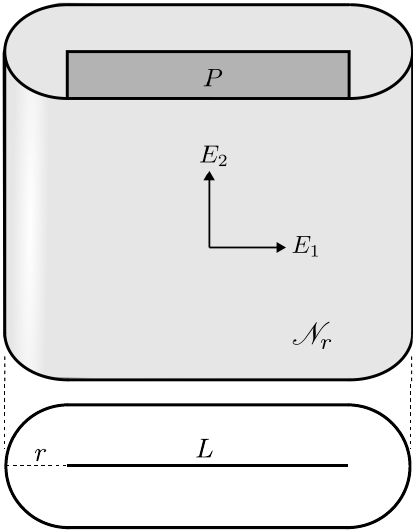}\ \ 
\caption{{\small  Segment of the cylindrical  surface $\mathcal{N}_r$ associated to an infinite vertical flat strip $P$ in $\mathbb{R}^3$. Constant acceleration due to gravity is $-gE_2$, where $E_1, E_2$ form a parallel orthonormal frame on $\mathcal{N}_r$.}}
\label{NH_strip}
\end{center}
\end{figure} 
 
The following fact, from \cite{CCCF20}, lends support to the above expressed hope for the usefulness of the nonholonomic billiards. It deals with rolling in  vertical cylinders in dimension $3$ under constant gravity.

\begin{theorem}[\cite{CCCF20}]\label{theorem_bounded_3D}
Suppose the cross-section of the $3$-dimensional vertical cylinder is a differentiable simple closed curve and that the initial velocity of the center of the rolling ball
has nonzero horizontal (i.e., cross-sectional) component. Then the trajectories of the rolling motion under a constant force parallel to the axis of the cylinder are bounded. 
\end{theorem}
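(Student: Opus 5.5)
We set up the rolling equations with external force, Equation (\ref{motion_force}), on the surface $\mathcal{N}=\mathcal{N}_r$ swept by the ball's center. For a $3$-dimensional ball rolling on the inner wall of a vertical cylinder, $\mathcal{N}$ is itself a vertical cylinder $\Gamma\times\mathbb{R}$. Here $\Gamma$ is the parallel curve at distance $r$ from the cross-section. Since the force is constant we have $\Psi=0$, $\psi=-ge$, and $n=2$, so the tangential spin is a scalar $s$ with $S=sJ$.

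The first step is to use the global parallel orthonormal frame $E_1=\tau$ (unit tangent to $\Gamma$, horizontal) and $E_2=e$ (vertical). These are principal directions, with $\mathbb{S}E_1=\kappa E_1$ and $\mathbb{S}E_2=0$, where $\kappa$ is the curvature of $\Gamma$ at the current point. Writing $v=v_1E_1+v_2E_2$, and using $\Pi_x\psi=-ge$ (vertical is tangent to $\mathcal{N}$), the equations reduce to
\[
\dot v_1=0,\qquad \dot v_2=-\eta\kappa v_1 s-\frac{g}{1+\gamma^2},\qquad \dot s=\eta\kappa v_1 v_2 .
\]
The signs must be confirmed from $J$ and $(\mathbb{S}v)\wedge v=\kappa v_1\,E_1\wedge v$; what matters is that the $v_2$–$s$ block is skew, as energy conservation forces.

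The second step is a change of time. The horizontal speed $v_1=c\neq 0$ is constant by hypothesis. Let $\sigma=ct$ be arclength along $\Gamma$ and set $\theta(\sigma)=\int_0^\sigma\kappa$, the turning angle of the tangent. With $a=g/(1+\gamma^2)$ and a prime denoting $d/d\sigma$, the system becomes the forced rotation
\[
(v_2,s)'=\eta\theta'\,J(v_2,s)-\frac{a}{c}(1,0).
\]
Put $w=v_2+is$. In the rotating variable $z=e^{-i\eta\theta}w$ this gives $z'=-\frac{a}{c}e^{-i\eta\theta(\sigma)}$, so
\[
z(\sigma)=z(0)-\frac{a}{c}\int_0^\sigma e^{-i\eta\theta(\tau)}d\tau .
\]
The height of the center is $h(t)=\int v_2\,dt=\frac1c\int \mathrm{Re}\,w\,d\sigma$.

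The third step, which I expect to be the main obstacle, is to show $h$ is bounded. For a simple closed curve $\theta$ increases by exactly $2\pi$ per circuit of length $L$, so $\theta(\sigma+L)=\theta(\sigma)+2\pi$. Define
\[
\Phi=\int_0^L e^{-i\eta\theta}\,d\sigma,\qquad \lambda=e^{-2\pi i\eta}.
\]
Integrating over successive circuits then yields a geometric-type recursion for $z$, with multiplier $\lambda$ and increment $\Phi$. Because $0<\eta<1$, we have $\lambda\neq1$. The partial sums $\sum_k\lambda^k$ are therefore bounded, which gives boundedness of $z$ and hence of $w$; this already bounds velocity and spin. Boundedness of the velocity alone does not control the height, however. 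To bound $h$ I would find a periodic particular solution and show its vertical velocity has zero mean over a circuit.

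Concretely, write $w=w_p+w_h$. The periodic particular solution $w_p$ exists because $\lambda\neq1$ (solve the one-circuit fixed-point equation). The homogeneous part is $w_h=Ce^{i\eta\theta}$, whose real part integrates to an $L$-periodic piece plus a drift proportional to $\int_0^L e^{i\eta\theta}d\sigma$ times a geometric factor. That drift is again summable, since $e^{2\pi i\eta k}$ has bounded partial sums. It then remains to check that $\int_0^L \mathrm{Re}\,w_p\,d\sigma=0$. I would try to verify this by integrating the $\dot s$ and $\dot v_2$ equations over one period, using periodicity of $w_p$ together with the identity $\int_0^L\theta' \cdot(\text{stuff})=\ldots$. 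If this fails directly, the fallback is energy: the relation $\frac12(v_2^2+s^2)+a h=$ const, with $v_1$ fixed, gives $h\ge h_0-\frac{|w(0)|^2}{2a}$ and bounds $h$ below. Upper boundedness then follows from the uniform bound on $|w|$ from the previous step via the same relation, since $h\le h_0+\frac{|w(0)|^2}{2a}$. This energy argument is likely the cleanest complete route. With it, the real work is just the uniform bound on $|w|$, which uses $\lambda\ne1$, i.e.\ $\eta\notin\mathbb{Z}$.
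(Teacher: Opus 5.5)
Your proof is correct and follows essentially the route the paper indicates: $z=e^{-i\eta\theta}w$ is the complex form of the integrating-factor solution $X(t)=I(t)^{-1}X(0)-I(t)^{-1}\int_0^t I(s)G\,ds$ that the paper writes for the strip. The boundedness step, which the paper defers to \cite{CCCF20}, comes as you say from $\theta(\sigma+L)=\theta(\sigma)\pm 2\pi$ (simple closed curve) and $e^{2\pi i\eta}\neq 1$ (positive moment of inertia, $0<\eta<1$), followed by energy conservation.

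One slip needs fixing. From $\tfrac12|w|^2+ah=\tfrac12|w(0)|^2+ah_0$, energy alone gives the upper bound $h\le h_0+|w(0)|^2/(2a)$. The lower bound $h\ge h_0-\sup_\sigma|w(\sigma)|^2/(2a)$ is where your uniform bound on $|w|$ enters. The inequality $h\ge h_0-|w(0)|^2/(2a)$ you wrote is false already for $w(0)=0$. Once this is straightened out, the detour through $w_p$ and its zero mean is unnecessary.
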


 Let us explore this fact for $P\subseteq \mathbb{R}^3$ consisting of the vertical strip $P$ shown in Figure \ref{NH_strip}. Let $E_1, E_2$ be the parallel orthonormal
 vector fields indicated in the figure and write
 $v=v_1E_1+v_2 E_2$, $S=s J$, $J=E_1\wedge E_2$. 
 Note that $E_1$ and $E_2$ are principal directions with respective principal curvatures   $\kappa(x)$ and $0$, where $\kappa=0$ on the front and back flat strips of $\mathcal{N}_r$ and $\kappa=-1/r$ on the two half-cylinders. 
  The acceleration due to gravity is the constant $g$. Equations (\ref{motion_force}) reduce to the system
\begin{align*}
\dot{v}_1 & = 0\\
\dot{v}_2 & = -\eta v_1 \kappa(x(t)) s -g\\
\dot{s} &= \eta v_1 \kappa(x(t))v_2.
\end{align*}
Let us set $v_1=\text{constant}=u$. The motion of the center of the ball (in $\mathcal{N}_r$) projects to  the cross-section in $\mathbb{R}^2$ to uniform motion, with
speed $u$, along the stadium shaped curve $t\mapsto (x_1(t), x_2(t))$.   Due to translation symmetry, the principal curvature $\kappa$ only depends on $(x_1, x_2)$.
We write $k(t):=\kappa(x_1(t),x_2(t))$.

\begin{figure}[htbp]
\begin{center}
\includegraphics[width=3.0in]{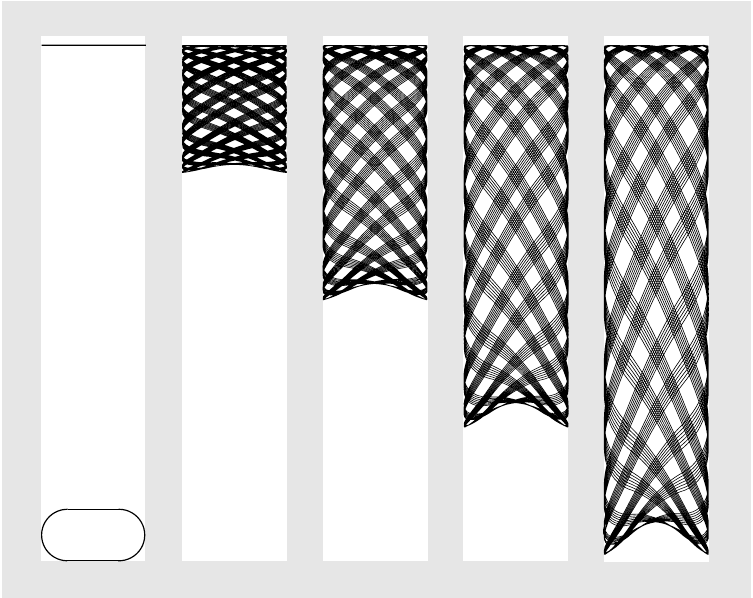}\ \ 
\caption{{\small Rolling on a vertical strip under gravity; $\mathcal{N}_r$ is a cylinder with a stadium shaped cross-section (see bottom-left here, or Figure \ref{NH_strip}), with $L=1$ and $r=0.5$.  Acceleration $g$ increases from left to right ($g=0$ on the far left). These trajectories should be compared with those of the no-slip billiard system of Figure \ref{plates}, illustrating the idea of
nonholonomic billiards as  smooth approximations of the no-slip kind. (No attempt was made to match parameters.)}}
\label{NH_vert_strip}
\end{center}
\end{figure}

Thus we are left to solve the time-dependent system
$$
\dot{v}_2= -\eta u k(t) s -g, \ \ 
\dot{s}    = \eta u k(t) v_2.
$$
This may be written in matrix form as
$$ \frac{d}{dt}\left(\begin{array}{c}v_2 \\s\end{array}\right) =\eta u k(t) \left(\begin{array}{cc}0 &\!\! -1 \\1 & 0\end{array}\right) \left(\begin{array}{c}v_2 \\s\end{array}\right)-g\left(\begin{array}{c}1 \\0\end{array}\right)$$
This is  easily solved by elementary methods. Let  $J$ be the $2$-by-$2$ matrix on the right-hand side of this equation and define
$$N(t):=\eta u \int_0^tk(s)\, ds, \ \ I(t):=\exp\left\{ -N(t)J \right\}. $$
Then the solution vector $X(t)=(v_2(t),s(t))^\intercal$ satisfies
$$X(t) =I(t)^{-1}X(0) - I(t)^{-1}\int_0^t I(s)G\, ds,$$
where $G=g(1,0)^\intercal$.
Solution curves for the center of the rolling ball are shown in Figure \ref{NH_vert_strip}. Here we see that boundedness of trajectories follows from the  general
result of Theorem \ref{theorem_bounded_3D}. Passing to the limit as $r$ approaches zero, we recover boundedness of trajectories of the no-slip billiard system on the vertical strip. (Figure \ref{NH_strip}.)

This example serves as a simple model for higher dimensional   cylinders and  provides some justification for  our  proposal of using rolling systems in one extra dimension to illuminate the dynamics of no-slip billiards. The $3$-dimensional case,  requiring rolling in dimension $4$, is naturally more challenging. The remainder of this paper indicates  the first steps in this direction. 
 
\section{Rolling over $3$-dimensional cylinders  in $\mathbb{R}^4$} \label{section_cylinder}
 In this section  we prepare the groundwork for a study of a $4$-dimensional ball rolling on a $3$-dimensional solid cylinder $P$ in $\mathbb{R}^3$. The immediate goal is
 to obtain the system of differential equations governing the motion. Those equations will have a hierarchical structure. The rolling hypersurface $\mathcal{N}_r$
 is a product $\overline{\mathcal{N}}_r\times \mathbb{R}$ where the cross-section  $\overline{\mathcal{N}}_r$ is  what  we call a {\em pancake surface} in $\mathbb{R}^3$. 
 The projection  of the rolling motion on this surface  satisfies  equations that  do not depend on the axial variable (height);  the cross-sectional trajectories then feed into a set of differential equations for the height function. It was seen above that, for the  rolling on a vertical strip in dimension $3$,   the cross-sectional motion was  very simple\----uniform motion on a closed simple curve\----while the motion on pancake surfaces can be much richer dynamically, as we will see shortly.
 
 \subsection{Some geometric definitions}
 A few  definitions are needed first.  Whenever convenient, we regard $\mathbf{x}=(x_1, x_2)\in \mathbb{R}^2$ as a point in $\mathbb{R}^3$ or $\mathbb{R}^4$ by the identification
$$ \mathbb{R}^2= \{(x_1, x_2, 0, 0)\}\subseteq \mathbb{R}^3=\{(\mathbf{x},x_3,0)\}\subseteq \mathbb{R}^4.$$
The cylinder $P$ is defined by its cross-section $\mathcal{C}$, a region in $\mathbb{R}^2$ whose boundary is differentiable, piecewise smooth, and has bounded curvature. 
(The assumption that this boundary is differentiable can
be relaxed so as to include polygonal regions; in this case, we require the wedge at each vertex made of tangent half-lines to be convex.)
We do not require
the boundary  of $\mathcal{C}$ to be connected; for example, it could be a strip bound by two parallel straight lines. Now set $P=\mathcal{C}\times\mathbb{R}$.
We may parametrized the boundary of $\mathcal{C}$ locally by arclength. If $\gamma(s)$ is such parametrization, then
$$e_2(\gamma(s)) := \gamma'(s), \ \ e_1(\gamma(s)) := -J e_2(\gamma(s)).$$
Here, $J$ is, at each point on the plane, the linear operation that rotates tangent vectors at that point by $\pi/2$ in the positive direction. See Figure \ref{gen_cyl} for
the orientation used. Further, 
let $e_3=(0,0,1,0)$, $e_4=(0,0,0,1)$.    

  \begin{figure}[htbp]
\begin{center}
\includegraphics[width=4.0in]{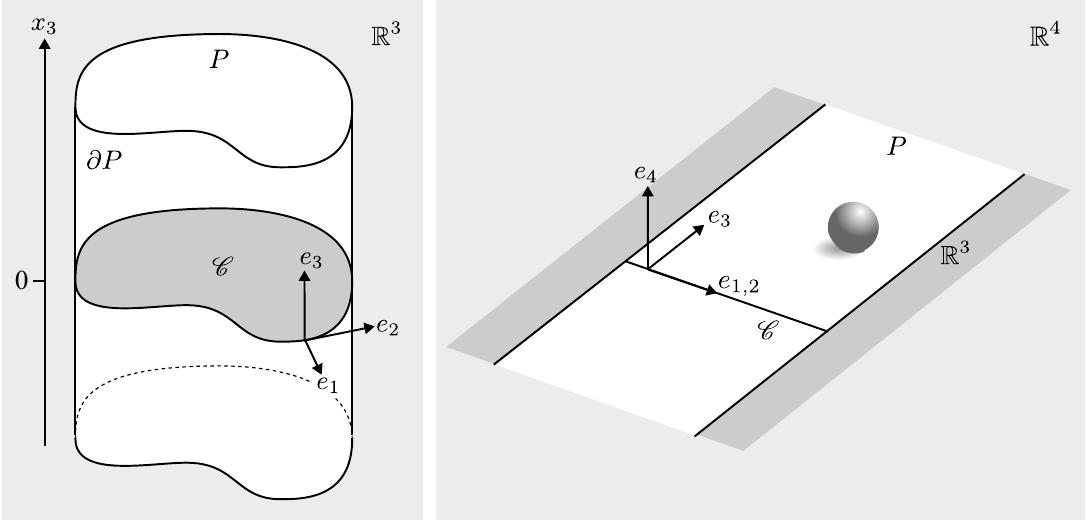}\ \ 
\caption{{\small On the left,  a general cylinder $P=\mathcal{C}\times \mathbb{R}$ in $\mathbb{R}^3$ with cross-section $\mathcal{C}$. On the right: schematic picture of the cylinder in $\mathbb{R}^4$ on which the $4$-dimensional ball rolls. The figure shows, in particular, the frame field $e_1, e_2, e_3, e_4$ adapted to the
cross-section contour.}}
\label{gen_cyl}
\end{center}
\end{figure}

Let $$P_r=\{ x\in \mathbb{R}^4: \text{dist}(x,P)\leq r\},$$ for $r>0$.  For sufficiently small $r$, the map $\pi_P:P_r\rightarrow P$  sending $x\in P_r$ to its closest point in $P$ is well-defined and differentiable, and the boundary of $P_r$, denoted
$$\mathcal{N}:=\mathcal{N}_r:=\{x\in \mathbb{R}^4: |x-\pi_P(x)|=r\}, $$
is a differentiable $3$-dimensional manifold without boundary.
Given $x=(\mathbf{x},x_3,x_4)\in P_r$, 
 $$\pi_P(x)=(\mathbf{y},x_3, 0)$$ and 
$\mathbf{y}=\mathbf{x} \text{ if } \mathbf{x}\in \mathcal{C}$, and $\mathbf{y}\in \partial \mathcal{C} \text{ if } \mathbf{x}\notin \mathcal{C}.$
It is also easily seen that  $\pi_P(x+se_3)=\pi_P(x)+se_3$.
Note that the vector fields  $e_1$ and $e_2$, initially defined on the boundary of $\mathcal{C}$,  extend to the complement in  $P_r$ of the interior of $P$:
$$e_1(x):=\frac{\mathbf{x}-\mathbf{y}}{|\mathbf{x}-\mathbf{y}|}, \ \ e_2(x)=Je_1(x). $$
Thus we have the adapted frame $e_1, e_2, e_3, e_4$ defined on that extended set. 

The hypersurface $\mathcal{N}$  decomposes as $\mathcal{N}=\mathcal{N}^+\cup \mathcal{N}^-\cup \mathcal{N}^c$ where 
$$\mathcal{N}^\pm:=\{x\in \mathbb{R}^4:\mathbf{x}\in \mathcal{C}\text{ and } x_4=\pm r\}, \ \ \mathcal{N}^c:=\{x\in \mathbb{R}^4: \mathbf{x}\notin \mathcal{C} \text{ and } |x-\pi_P(x)|=r\}. $$ 
We refer to $\mathcal{N}^\pm$ as the {\em flat parts} of $\mathcal{N}$ and to $\mathcal{N}^c$ as the {\em curved part}. See Figure \ref{N_surface}.

The unit normal vector field to $\mathcal{N}$ is defined at a point $x$ on the hypersurface by
$$ \nu(x):=\begin{cases}
\pm e_4 & \text{ if } x\in \mathcal{N}^\pm\\
 (x-\pi_P(x))/r & \text{ if } x\in \mathcal{N}^c.
\end{cases}$$
For $x$ on the curved part $\mathcal{N}^c$, define the unit tangent vector 
 $\tau(x)$ perpendicular to $e_2$ and $e_3$, with the additional property that if $x\in \mathcal{N}^\pm\cap \mathcal{N}^c$, then its projection
 to $\mathbb{R}^3$ equals $\pm e_1(x)$. 
 Note that the  $\nu(x)$ and  $\tau(x)$  and  the pair $e_1(x)$ and  (the constant) $e_4(x)$ both linear span the same plane and  
 the vector fields $\tau$, $e_2$ and $e_3$ constitute an orthonormal frame on the tangent bundle of $\mathcal{N}^c$.
When convenient, we use the more uniform notation $X_1=\tau$, $X_2=e_2$, $X_3=e_3$.

We call an $x_3$-slice of $\mathcal{N}_r$ a {\em pancake surface}. The $0$-slice is $\overline{\mathcal{N}}_r$ and $\mathcal{N}_r=\overline{\mathcal{N}}_r\times \mathbb{R}$. Figure \ref{Sinai_pancake} shows, on the right, the pancake surface associated to a cylinder $P$ whose cross-section is a Sinai billiard table. This is 
a torus with an open disc removed.

 \subsection{A parametrization of $\mathcal{N}^c$}
 Let $\varphi$ be the angle shown in Figure \ref{N_surface}, $0\leq \varphi\leq \pi$. 
Then
$$\nu(x)=\sin(\varphi)e_1(x)+\cos(\varphi)e_4, \ \  \tau(x)=\cos(\varphi) e_1-\sin(\varphi) e_4.$$
Given a local parametrization $s\mapsto \gamma(s)$   of the boundary of $\mathcal{C}$ and $\varphi$, we
obtain a local parametrization of $\mathcal{N}^c$ as
$$ x=a(s,\varphi, x_3)= \gamma(s)+ r\left[\sin(\varphi) e_1(\gamma(s)) + \cos(\varphi) e_4\right] +x_3 e_3.$$
Let $\kappa(s)$ be the curvature of $s\mapsto \gamma(s)$, defined by the expression
$$ \frac{d}{ds}e_1(\gamma(s))=-\kappa(s) e_2(\gamma(s)).$$

 \begin{figure}[htbp]
\begin{center}
\includegraphics[width=3.0in]{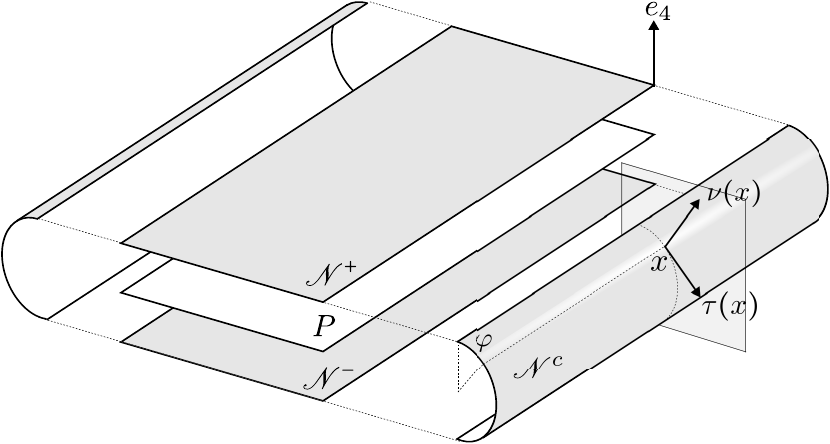}\ \ 
\caption{{\small Decomposition of the hypersurface $\mathcal{N}$ into its flat and curved parts.}}
\label{N_surface}
\end{center}
\end{figure} 

For example, if $\mathcal{C}$ is a disc of radius $R$, then $\kappa=-1/R$.
Relative to this parametrization, the vector fields $X_1=\tau$, $X_2=e_2$ and $X_3=e_3$ (expressed as first order differential operators on functions on $\mathcal{N}$) assume the form
$$X_1(s,\varphi,x_3)= \frac1r \frac{\partial}{\partial \varphi}, \ \ X_2(s,\varphi,x_3)=\frac{1}{1-r\kappa(s)\sin\varphi} \frac{\partial}{\partial s}, \ \ X_3(s,\varphi,x_3)=\frac{\partial}{\partial x_3}$$
as is easily shown.  Note that $\kappa(s)\leq 0$ if $\mathcal{C}$ is convex. The need to make $r$ sufficiently small in order to avoid focal points in $\mathcal{N}_r$
is seen clearly in the expression of $X_2$. The following proposition, in which $\nabla$ indicates the Levi-Civita connection on $\mathcal{N}$ for the Riemannian metric induced from the dot product in $\mathbb{R}^4$, summarizes some of the basic properties of these vector fields. Recall that the {\em shape operator} is defined by
 $$\mathbb{S}_x v:=-D_{v}\nu$$ for  $v\in T_x\mathcal{N}^c$, where $D$ indicates the Euclidean directional derivative of vector fields.

\begin{figure}[htbp]
\begin{center}
\includegraphics[width=3.0in]{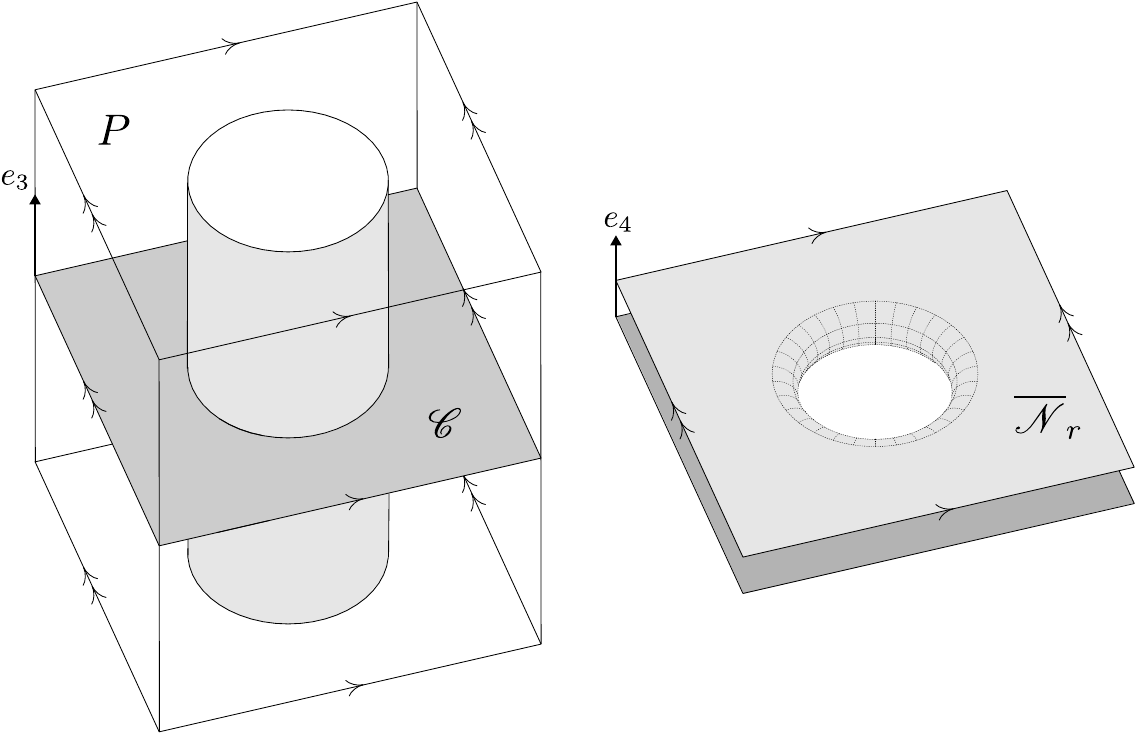}\ \ 
\caption{{\small  Left:  $P$ is a cylinder whose cross-section is a Sinai billiard table; right:   pancake $x_3$-slice of $\mathcal{N}_r$. Vertical direction: $e_3$ on the left, $e_4$ on the right. Compare with Figure \ref{Sinai_compare}.}}
\label{Sinai_pancake}
\end{center}
\end{figure}

\begin{proposition} \label{calculations_frame} Define the functions
$$f_c(s,\varphi)=\frac{\kappa(s)\cos\varphi}{1-r\kappa(s) \sin \varphi}, \ \ f_s(s,\varphi)=\frac{\kappa(s)\sin\varphi}{1-r\kappa(s) \sin \varphi}. $$
The vector fields $X_1, X_2, X_3$ satisfy the following properties. Their Lie brackets are
$$ [X_1, X_2]=f_c(s,\varphi)X_2, \ \ [X_1, X_3]=[X_2,X_3]=0.$$
Their covariant derivatives are
$$\nabla_{X_2}X_2 =f_cX_1,\ \ \nabla_{X_2}X_1 = -f_c(s,\varphi) X_2, \ \ \nabla_{X_1}X_j=0, \ \ \nabla_{X_3}X_j=0, \ \ \nabla_{X_j}X_3=0 $$
 for all  $j$. At each point,  $X_1, X_2, X_3$ are principal directions for  the shape operator:
$$\mathbb{S}_x X_1 =-\frac1r X_1, \ \  \mathbb{S}_x X_2 =f_s(s,\varphi)X_2, \ \ \mathbb{S}_x X_3=0.$$
Finally, 
$ \nabla_v (X_1\wedge X_2)=0$ and 
$$
\nabla_v (X_1\wedge X_3) = -\langle X_2, v\rangle f_c(s,\varphi)X_2\wedge X_3, \ \ 
\nabla_v (X_2\wedge X_3) =  \langle X_2, v\rangle f_c(s,\varphi) X_1\wedge X_3.
$$
\end{proposition}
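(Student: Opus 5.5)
The plan is a direct computation in the local parametrization $x=a(s,\varphi,x_3)$ of $\mathcal{N}^c$. I would first compute the coordinate tangent vectors
$$\partial_\varphi a = r\tau,\qquad \partial_s a = e_2 + r\sin\varphi\,\frac{d}{ds}e_1(\gamma(s)) = (1-r\kappa\sin\varphi)\,e_2,\qquad \partial_{x_3}a=e_3,$$
using $\gamma'=e_2$ and $\frac{d}{ds}e_1=-\kappa e_2$. This confirms the stated expressions for $X_1,X_2,X_3$ as differential operators and shows that they are orthonormal. I also need $\frac{d}{ds}e_2=\kappa e_1$, which follows from $e_2=Je_1$.

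\textbf{Lie brackets.} With $h(s,\varphi):=1-r\kappa(s)\sin\varphi$, write $X_2=h^{-1}\partial_s$ and $X_1=r^{-1}\partial_\varphi$. Coordinate fields commute, so
$$[X_1,X_2]=\tfrac1r\,\partial_\varphi(h^{-1})\,\partial_s=\frac{\kappa\cos\varphi}{h^2}\,\partial_s=f_c\,X_2 .$$
Since $X_3=\partial_{x_3}$ and no coefficient depends on $x_3$, both $[X_1,X_3]$ and $[X_2,X_3]$ vanish.

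\textbf{Covariant derivatives and shape operator.} I will not use the Koszul formula. Instead I would compute Euclidean directional derivatives $D$ in $\mathbb{R}^4$ and project tangentially, since $\nabla_XY=D_XY-\langle D_XY,\nu\rangle\nu$.

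The $X_1$-direction derivatives follow from
$$D_{X_1}\tau=\tfrac1r\,\partial_\varphi\tau=-\tfrac1r\nu,\qquad D_{X_1}\nu=\tfrac1r\tau .$$
Here $e_2$ and $e_3$ are constant in $\varphi$, so $D_{X_1}e_2=D_{X_1}e_3=0$. This gives $\nabla_{X_1}X_j=0$ and $\mathbb{S}X_1=-\tfrac1r X_1$.

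The $X_2$-direction derivatives follow from
$$D_{X_2}e_2=h^{-1}\kappa e_1=h^{-1}\kappa(\cos\varphi\,\tau+\sin\varphi\,\nu).$$
Its tangential part is $f_cX_1$. Next,
$$D_{X_2}\tau=h^{-1}\cos\varphi\,(-\kappa e_2),$$
which gives $\nabla_{X_2}X_1=-f_cX_2$. Similarly,
$$D_{X_2}\nu=-h^{-1}\kappa\sin\varphi\,e_2,$$
so $\mathbb{S}X_2=f_sX_2$.

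Everything involving $X_3$ is trivial because $e_3$ is constant and nothing depends on $x_3$. Thus $\nabla_{X_3}X_j=0$, $\nabla_{X_j}X_3=0$, and $\mathbb{S}X_3=0$. Torsion-freeness against the bracket computation serves as a consistency check, since $\nabla_{X_1}X_2-\nabla_{X_2}X_1=f_cX_2$.

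\textbf{Wedge identities.} I would apply the product rule (\ref{prod_rule_wedge}) with $v=\sum_i v_iX_i$, $v_i=\langle X_i,v\rangle$. From the covariant derivatives above,
$$\nabla_vX_1=-v_2f_cX_2,\qquad \nabla_vX_2=v_2f_cX_1,\qquad \nabla_vX_3=0.$$
Then
$$\nabla_v(X_1\wedge X_2)=-v_2f_c\,X_2\wedge X_2+v_2f_c\,X_1\wedge X_1=0,$$
$$\nabla_v(X_1\wedge X_3)=-v_2f_c\,X_2\wedge X_3,\qquad \nabla_v(X_2\wedge X_3)=v_2f_c\,X_1\wedge X_3 .$$

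The main obstacle is only bookkeeping: the signs and orientation conventions for $e_1$, $e_2=Je_1$, $\kappa$, $\nu$ and $\tau$. A useful check is that at $\varphi=\pi/2$ the frame $\tau$ reduces correctly. I also need to confirm that $\frac{d}{ds}e_2=+\kappa e_1$ under the paper's convention $e_1=-Je_2$. I would fix these by checking the disc case $\kappa=-1/R$, where $\mathbb{S}X_2$ should equal $-\frac{\sin\varphi}{R+r\sin\varphi}X_2$, the curvature of a circle of radius $R+r\sin\varphi$.
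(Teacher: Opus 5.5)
Your proposal is correct and is exactly the direct computation the paper has in mind; the paper's own proof says only that these are straightforward calculations in Riemannian geometry. Your bookkeeping worries resolve as you expected: $\frac{d}{ds}e_2=J\frac{d}{ds}e_1=-\kappa Je_2=\kappa e_1$, and $e_1=\cos\varphi\,\tau+\sin\varphi\,\nu$, so every bracket, covariant derivative, shape-operator and wedge identity you derived checks out.
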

\begin{proof}
These only involve straightforward calculations in Riemannian geometry.
\end{proof}

\subsection{The equations of motion on $\mathcal{N}$}
We write the equations of motion (Equations (\ref{motion_force}) with constant external force) separately  on  the flat and curved parts of $\mathcal{N}$.  On the flat parts, $\mathbb{S}=0$ 
and the system decouples into Newton's equation for the motion of the center of the moving ball and constant $S$ (that is, $S$ is parallel transported along the
trajectory of the center of the ball):
$$\frac{\nabla \dot{x}(t)}{dt}=-ge_3, \ \ \frac{\nabla S}{dt}=0, $$
where `$\cdot$' indicates derivative in $t$.
The covariant derivative of $\dot{x}$ is simply $\ddot{x}$ on $\mathcal{N}^\pm$, so $$x(t)=x(0) +\dot{x}(0)t -\frac{g}2  t^2 e_3,  \ \  S(t)=S(0).$$

Let us focus on the curved part $\mathcal{N}^c$ and rewrite the equations of motion in the orthonormal frame
 $X_1, X_2, X_3$. 
Let $t\mapsto \xi(t)\in \mathcal{M}_{x(t)}$ be a motion of the rolling system, with $\xi(t)=(S(t), v(t))$. (See Section \ref{equations_M_section}.)
The linear and angular velocities are expressed in the orthonormal frame as
$$v(t) =\sum_{i} v_i(t)X_i(x(t)), \ \ S(t)=\frac12 \sum_{i,j} S_{ji}(t)X_i(x(t))\wedge X_j(x(t)).$$
Using the properties of the frame fields summarized in Proposition \ref{calculations_frame}, we obtain the derivatives:
$$\frac{\nabla v}{dt} = \sum_i \dot{v}_i X_i + f_c v_2 (v_2 X_1-v_1 X_2), \ \frac{\nabla S}{dt}=\frac12\sum_{i,j}\dot{S}_{ji} X_i\wedge X_j +v_2 f_c (S_{32}X_1-S_{31}X_2)\wedge X_3.$$

In addition,
$$S\mathbb{S}  v = S_{12}\left( v_2 f_s  X_1 +  \frac{v_1}{r}  X_2 \right)+\left(\frac{v_1}{r}S_{13}-v_2f_sS_{23}\right) X_3$$
and 
$$(\mathbb{S}v)\wedge v=-v_1v_2\left(f_s+\frac1r\right)X_1\wedge X_2  -v_3 \left(\frac1r v_1 X_1-v_2 f_s X_2\right)\wedge X_3.$$
Plugging these expressions into the equations of motion gives the following proposition.

\begin{proposition}\label{equations_4}
The rolling equations, on the curved part of $\mathcal{N}_r$, expressed in terms of the orthonormal frame $X_1, X_2, X_3$ become the following system:
$$\dot{v}_1= -f_c v^2_2 -\eta f_s v_2S_{12}, \ \  \dot{v}_2 = f_c v_1 v_2 - \frac{\eta}r v_1S_{12}, \ \ \dot{S}_{12} = \eta \left(f_s + \frac1r\right) v_1 v_2$$
and 
$$\dot{v}_3 =\eta \left(f_s v_2 S_{23}- \frac1r v_1 S_{13}\right)-g, \ \  \dot{S}_{13}  = -f_c v_2 S_{23} +\frac{\eta}{r} v_1 v_3, \ \ 
\dot{S}_{23} = f_c v_2 S_{13} -{\eta} f_s v_2 v_3.$$
(Note the separation into equations that contain the subindex $3$  and those that do not.)
\end{proposition}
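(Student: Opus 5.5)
The plan is to substitute the frame expressions just displayed into Equations (\ref{motion_force}) with constant force $\varphi=-g e_3$, so that $\Psi=0$ and $\psi=-ge_3$. On $\mathcal{N}^c$ the vector $e_3=X_3$ is tangent, so $\Pi_x\psi=-gX_3$. The equations then read
$$\frac{\nabla v}{dt}=-\eta S\mathbb{S}v-\frac{g}{1+\gamma^2}X_3,\qquad \frac{\nabla S}{dt}=\eta(\mathbb{S}v)\wedge v.$$
The gravitational coefficient $1/(1+\gamma^2)$ does not match the stated $-g$. I would absorb it into a rescaled $g$, i.e. read $g$ as the effective acceleration, and note this normalization explicitly.

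First, the linear-velocity equation. Take the inner product of both sides with $X_1,X_2,X_3$, using the displayed formula for $\nabla v/dt$ and for $S\mathbb{S}v$:
\begin{align*}
\langle X_1,\cdot\rangle&:\ \dot v_1+f_cv_2^2=-\eta v_2f_sS_{12},\\
\langle X_2,\cdot\rangle&:\ \dot v_2-f_cv_1v_2=-\eta\tfrac{v_1}{r}S_{12},\\
\langle X_3,\cdot\rangle&:\ \dot v_3=-\eta\bigl(\tfrac{v_1}{r}S_{13}-v_2f_sS_{23}\bigr)-g.
\end{align*}
These are exactly the three stated equations. The only check needed is the sign convention $S_{ji}=X_i\cdot SX_j$ with $S=\frac12\sum S_{ji}X_i\wedge X_j$. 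I would verify it against $(u\wedge w)z=(u\cdot z)w-(w\cdot z)u$: this gives $SX_1=S_{12}X_2+S_{13}X_3$, and $\mathbb{S}v=-\frac{v_1}{r}X_1+f_sv_2X_2$ by Proposition \ref{calculations_frame}.

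Second, the spin equation. Pair both sides with $X_i\wedge X_j$ under the trace inner product, or equivalently take coefficients of $X_1\wedge X_2$, $X_1\wedge X_3$, $X_2\wedge X_3$. Using the displayed $\nabla S/dt$, the $X_1\wedge X_2$ coefficient gets no extra term, because $\nabla(X_1\wedge X_2)=0$. It therefore gives $\dot S_{12}=\eta(f_s+\frac1r)v_1v_2$, taking care with the orientation convention that relates $S_{12}$ to the coefficient of $X_2\wedge X_1$ or $X_1\wedge X_2$. The $X_1\wedge X_3$ and $X_2\wedge X_3$ coefficients pick up the connection terms $\mp v_2f_cS_{23}$, $\pm v_2f_cS_{13}$ that come from Proposition \ref{calculations_frame}. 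Matching these against $-v_3(\frac1r v_1X_1-v_2f_sX_2)\wedge X_3$ yields the last two equations.

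The main obstacle is bookkeeping of signs: the index convention $S_{ji}$ versus the wedge orientation, and the fact that each antisymmetric pair appears twice in $\frac12\sum$. I would fix these once by testing on a single basis element, e.g. $S=X_1\wedge X_2$. Then I would recheck each equation against the energy conservation identity $\frac{d}{dt}(\sum v_i^2+\sum_{i<j}S_{ij}^2)=-2gv_3$. The $f_c$ terms and the $\eta$ terms must cancel pairwise in that identity, which gives an independent confirmation of all signs.
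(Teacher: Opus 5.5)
Your route is the paper's own: expand $v$ and $S$ in the frame $X_1,X_2,X_3$, take the connection and shape-operator terms from Proposition~\ref{calculations_frame}, substitute into the forced rolling equations with $\Psi=0$, $\psi=-ge_3$, and read off components. Your remark on the factor $1/(1+\gamma^2)$ is correct. The paper simply writes $-g$ here, on the flat parts, and in the strip example, so $g$ must be read as the effective acceleration, and saying so explicitly is the right fix.

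The gap is the sign convention, which is the only real content of this computation. The pair you quote, $S_{ji}=X_i\cdot SX_j$ together with $S=\frac12\sum_{i,j}S_{ji}X_i\wedge X_j$, cannot both hold. From $(u\wedge w)z=(u\cdot z)w-(w\cdot z)u$ one gets $X_a\cdot(X_i\wedge X_j)X_b=\delta_{ib}\delta_{ja}-\delta_{ia}\delta_{jb}$, so the expansion forces $S_{ab}=X_a\cdot SX_b$, i.e.
\[
S=S_{12}\,X_2\wedge X_1+S_{13}\,X_3\wedge X_1+S_{23}\,X_3\wedge X_2,\qquad SX_1=-S_{12}X_2-S_{13}X_3,\qquad SX_2=S_{12}X_1-S_{23}X_3 .
\]
Only with this reading does the displayed formula for $S\mathbb{S}v$ hold. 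Your claimed outcome of the check, $SX_1=S_{12}X_2+S_{13}X_3$, gives exactly the negative of that display. Carried through consistently, it flips every $\eta$-term in all six equations, e.g.\ $\dot v_1=-f_cv_2^2+\eta f_sv_2S_{12}$ and $\dot S_{12}=-\eta(f_s+\frac1r)v_1v_2$. So the verification you describe contradicts the formula you then rely on. (The definition $S_{ji}:=E_i\cdot(SE_j)$ does appear in the paper's edge computation, but it is incompatible with the wedge expansion, and the displays preceding this proposition follow the expansion.)

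Your energy fallback cannot catch this. Replacing $S$ by $-S$ turns the rolling equations into the same system with $\eta$ replaced by $-\eta$. Since $|v|^2+\frac12\mathrm{Tr}(SS^\intercal)$ is conserved for either sign of $\eta$, the energy identity only checks relative signs within the cancelling pairs. It cannot distinguish the two consistent conventions, which is exactly the issue.

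The fix is to adopt $S_{ab}=X_a\cdot SX_b$ throughout. Your three linear-velocity components then stand as written. The $X_1\wedge X_2$, $X_1\wedge X_3$, $X_2\wedge X_3$ coefficients of $\nabla S/dt$ are $-\dot S_{12}$, $-\dot S_{13}-f_cv_2S_{23}$, and $-\dot S_{23}+f_cv_2S_{13}$. Equating them with the corresponding coefficients $-\eta(f_s+\frac1r)v_1v_2$, $-\frac{\eta}{r}v_1v_3$, and $\eta f_sv_2v_3$ of $\eta(\mathbb{S}v)\wedge v$ gives the three stated spin equations.
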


It is worth noting the hierarchical  structure of the equations of Proposition (\ref{equations_4}): the first set of equations involving $v_1, v_2, S_{12}$
are independent of $v_3, S_{13}, S_{23}$ and $x_3$. It represents the motion projected to a cross-section. So the full set of equations can be approached by first solving that first set, then feeding the resulting solutions
into the second set, which now becomes a time-dependent system. We will explore this observation in greater detail in the next section.
For now, further note that the kinetic energy of the system is
$$E=\frac{m}2\left(v_1^2+v_2^2+v_3^2 + S_{12}^2+S_{13}^2+S_{23}^2\right) = E_1+E_2$$
where 
$$E_1 =  \frac{m}2 \left(v_1^2 +v_2^2 + S_{12}^2\right), \ \ E_2 = \frac{m}2\left(v_3^2 + S_{13}^2+S_{23}^2\right)$$
and 
$$\frac{dE_1}{dt}=0, \ \ \frac{dE_2}{dt}=-mg v_3.$$
Thus the transversal motion is conservative by itself, while $E_2+mgx_3$ is conserved, where $mgx_3$ is the gravitational potential energy.
The above proposition should be compared with Theorem \ref{theorem_project} about no-slip billiard systems.
\subsection{Rolling flows on pancake surfaces}
 It follows from Proposition \ref{equations_4} that the motion on $\mathcal{N}_r=\overline{\mathcal{N}}_r$ decomposes into the transversal, free rolling motion on the pancake surface
 $\overline{\mathcal{N}}_r$ (see Figure \ref{Sinai_pancake})    and longitudinal motion governed by the second set of equations in that proposition.  Thus the transversal motion is entirely equivalent to the rolling of a $3$-dimensional ball on the cross-sectional plate $\mathcal{C}$, the $3$-dimensional ball having the
 same moment of inertia parameter $\eta$ as the $4$-dimensional one. 
 
  \begin{figure}[htbp]
\begin{center}
\includegraphics[width=3.0in]{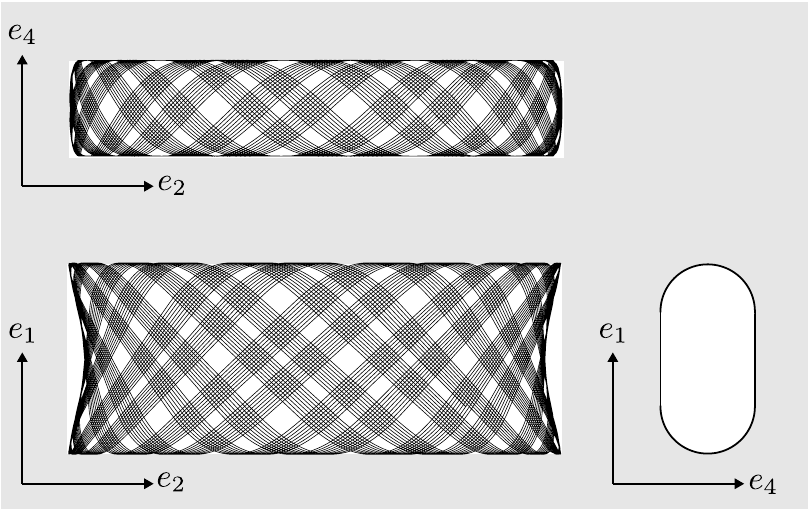}\ \ 
\caption{{\small  Transversal trajectory in $\overline{\mathcal{N}}_r$  of the center of rolling ball on the pancake surface for $P$ bounded by two parallel planes. The cross-section of $\overline{\mathcal{N}}_r$ (contained in the plane spanned by $e_4$ and $e_1$) is stadium-shaped (see bottom-left of Figure \ref{NH_vert_strip}). The longitudinal direction is $e_2$. 
}}
\label{two_planes_pancake}
\end{center}
\end{figure}

 Let us illustrate this idea with the example for which
 $P$ is the region bounded by a pair of parallel vertical planes in $\mathbb{R}^3$. In this case, the pancake surface is essentially the same as the cylinder with
  a stadium-shaped cross-section shown in Figure \ref{NH_strip}, except that the direction perpendicular to $E_1$ and $E_2$ in the figure corresponds to the
 coordinate vector $e_4$ in $\mathbb{R}^4$.  

 \begin{figure}[htbp]
\begin{center}
\includegraphics[width=5.0in]{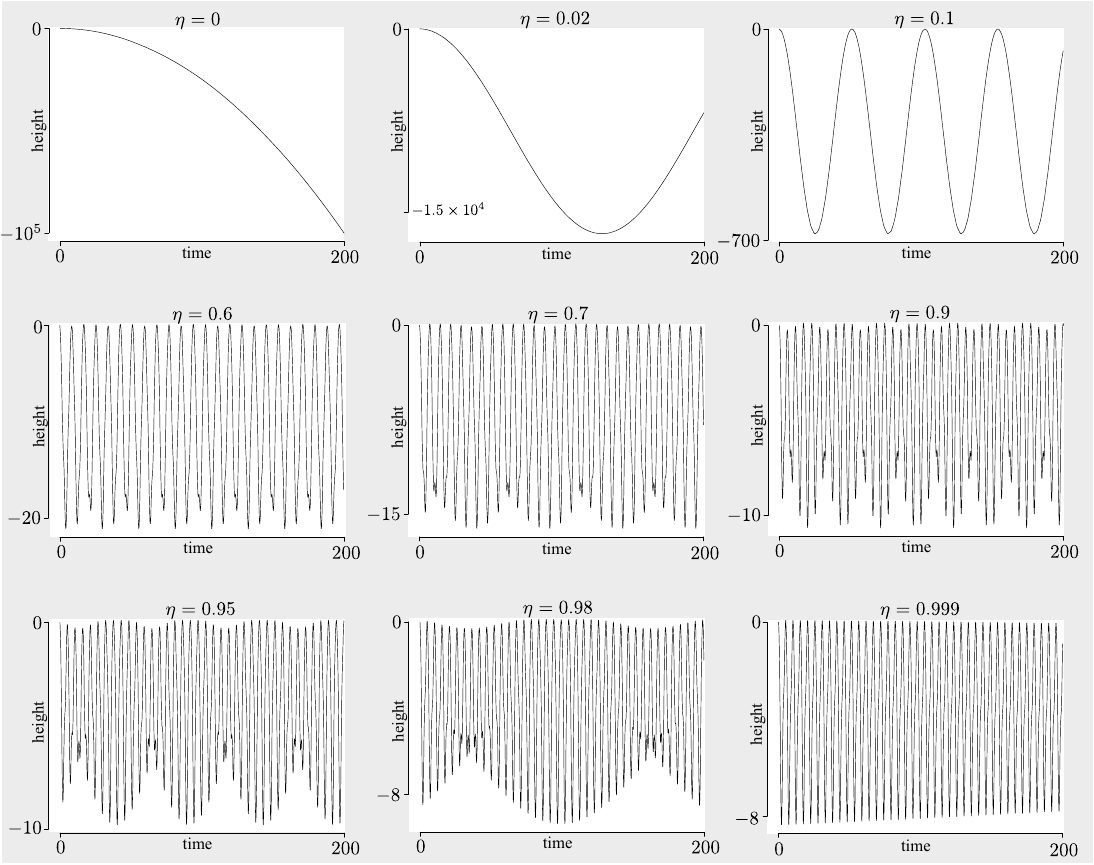}\ \ 
\caption{{\small  Height function of time ($x_3(t)$) for the nonholonomic billiard on $P$ consisting of the region between two vertical planes in $\mathbb{R}^3$
subject to gravity acceleration. When the moment of inertia parameter $\eta$ is zero (top left), the $4$-dimensional rolling ball shows the characteristic parabolic free fall curve. For any positive value of $\eta$, initial free fall eventually rebounds and the motion becomes periodic. (For $\eta=0.999$, the periodicity is not apparent in the time range of the above plots.)
}}
\label{height_eta}
\end{center}
\end{figure}

Before writing the equations of motion for this example, define on $\mathcal{N}_r$ the function
 $$ \zeta(x)=\begin{cases}
 \frac{\eta v_1}{r} & \text{ for } x\in  \mathcal{N}^c_r\\
 0 & \text{ for  } x\in  \mathcal{N}^\pm_r.
 \end{cases}$$
   Note that this function is invariant under translation in the $x_3$-direction, so $\zeta(x)=\zeta(\overline{x})$, where $\overline{x}$ is the projection of $x$ to $\overline{\mathcal{N}}_r$.
   Additionally, this definition anticipates that $v_1$ is a constant of motion.
    Then
 $$ \dot{v}_1 = 0, \ \ \dot{v}_2=-\zeta(x)S_{12},  \ \ \dot{S}_{12}=\zeta(x) v_2$$ 
 are the equations for the transversal motion and 
  $$\dot{v}_3=-\zeta(x)S_{13} - g, \ \ \dot{S}_{13} =\zeta(x)v_3, \ \ \dot{S}_{23}=0$$
  are the equations for the longitudinal motion. 
  These equations are solved just as was done in the example  in Section \ref{vertial_strip_g}.
  Boundedness of the height function ($x_3$), for this example, can be shown analytically as in the case of dimension $3$, by the argument used in \cite{CCCF20}, Proposition 9.

\begin{figure}[htbp]
\begin{center}
\includegraphics[width=4in]{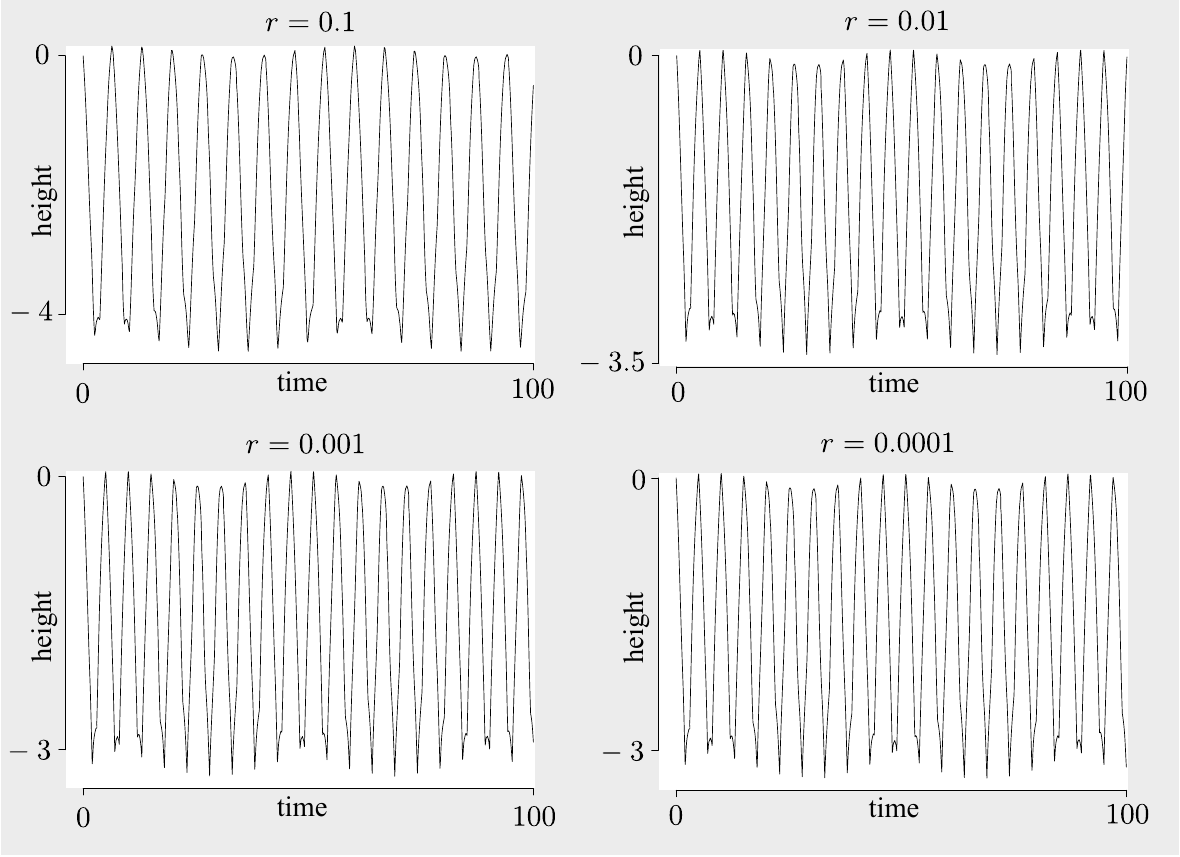}\ \ 
\caption{{\small The nonholonomic billiard system converges to a no-slip billiard system as the radius of the rolling ball tends to $0$.
}}
\label{radius_limit}
\end{center}
\end{figure}

  \begin{theorem}
  The nonholonomic billiard in the region $P$ bounded by two vertical planes in $\mathbb{R}^3$, under constant acceleration due to gravity,
  nonzero initial velocity component $v_1$ (ensuring that the ball will reach the boundary of $P$), and positive moment of inertia parameter, has a periodic (hence bounded) height function.
  \end{theorem}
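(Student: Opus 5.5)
The plan is to exploit the hierarchical structure of the equations: the transversal motion is periodic, and the longitudinal motion is then a periodically forced linear system whose net effect over one period is an elliptic (rotation-plus-translation) map. The first step is to pin down the transversal motion. Since $v_1$ is constant and nonzero, the projection of the ball's center onto the cross-section of the pancake surface moves at constant speed $|v_1|$ around the stadium-shaped curve (two flat segments of length $L$ and two semicircles of radius $r$). Hence $\zeta(x(t))=\zeta(t)$ is a periodic step function of period
$$T=\frac{2L+2\pi r}{|v_1|}.$$
It equals $\eta v_1/r$ on the two arcs, each traversed in time $\pi r/|v_1|$, and is $0$ elsewhere.

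Next I would rewrite the longitudinal equations, with $X=(v_3,S_{13})^\intercal$, as
$$\dot X=\zeta(t)JX-G,\qquad J=\begin{pmatrix}0&-1\\1&0\end{pmatrix},\quad G=g(1,0)^\intercal.$$
This is the same system solved in Section \ref{vertial_strip_g}, with $S_{23}$ constant and decoupled. Set $N(t)=\int_0^t\zeta$. Over one period,
$$N(T)=2\cdot\frac{\eta v_1}{r}\cdot\frac{\pi r}{|v_1|}=\pm 2\pi\eta.$$
So the period map $X(0)\mapsto X(T)$ is the affine map $X\mapsto R X+b$, where $R=e^{N(T)J}$ is rotation by the angle $\pm2\pi\eta$ and $b$ is a fixed vector. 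The key point is that $0<\eta<1$, since $\eta=\gamma/\sqrt{1+\gamma^2}$ with $\gamma>0$. Therefore $R\neq I$, $I-R$ is invertible, and the affine map has a unique fixed point $X^*=(I-R)^{-1}b$. It follows that
$$X(kT)=R^k\bigl(X(0)-X^*\bigr)+X^*$$
stays on a circle, and by continuous dependence over $[0,T]$ the whole function $X(t)$ is bounded.

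The main obstacle is passing from bounded or quasi-periodic velocity to a periodic height $x_3$. Integrating $\dot x_3=v_3$ over a period gives
$$x_3((k+1)T)-x_3(kT)=\int_0^T v_3(t+kT)\,dt,$$
which is affine in $X(kT)$: it equals $\langle c, X(kT)\rangle+d$ for a fixed vector $c$ and constant $d$. I would try to show that the drift vanishes, that is, $\langle c,X^*\rangle+d=0$. My first approach is energy: $E_2+mgx_3$ is conserved and $E_2$ is bounded because $X$ is bounded, so $x_3$ is bounded. The increments $x_3((k+1)T)-x_3(kT)$ therefore form a sequence $\langle c,R^kY\rangle+\text{const}$ (with $Y=X(0)-X^*$) whose partial sums stay bounded. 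Since $R^k$ is a nontrivial rotation, the oscillating part has bounded partial sums, and the constant must then be zero. This already gives boundedness.

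For genuine periodicity, the monodromy $R$ has period $T$ only up to rotation, so the combined motion is periodic with period $T$ when $\eta$ makes $R^k=I$ for some $k$, and quasi-periodic otherwise. I would therefore interpret the periodicity claim by exhibiting the explicit solution. The argument of \cite{CCCF20}, Proposition 9, presumably carries out this computation of the fixed point and shows that the oscillatory structure yields a periodic $x_3$. Reconciling this with the possibility of irrational $\eta$ is the step I expect to need the most care.
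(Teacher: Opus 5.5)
Your boundedness argument is correct and is essentially the one the paper invokes: the integrating-factor solution of Section~\ref{vertial_strip_g}, combined with the monodromy argument of \cite{CCCF20}, Proposition 9. Over one lap $N(T)=\pm2\pi\eta\notin 2\pi\mathbb{Z}$, so the period map is a nontrivial rotation about a fixed point $X^*$.

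Two small remarks. First, $X_1$ must be oriented continuously around the stadium. The paper's $\tau$ reverses orientation from one edge to the other. It is the invariance of the equations under $X_1\mapsto -X_1$ (which flips $v_1$ and $S_{13}$ together) that gives $\zeta$ the same sign on both arcs, so $N(T)\neq 0$. Second, the drift analysis is superfluous. With $S_{23}$ constant, conservation of $E_2+mgx_3$ reads
\[
x_3(t)=C-\frac{1}{2g}\bigl(v_3(t)^2+S_{13}(t)^2\bigr),
\]
so $x_3$ is bounded because $X$ is, and $x_3$ is periodic exactly when $|X|^2$ is.

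The gap is periodicity, which you leave to what \cite{CCCF20} ``presumably'' does. No reference can supply it: for irrational $\eta$ it fails for every initial condition off the periodic solution $Z$ (the solution with $Z(0)=X^*$). Here is why.
\begin{itemize}
\item Suppose $x_3$ has a period $\tau>0$. On open flat-time intervals $\ddot x_3=-g$. On an open arc interval $\ddot x_3+g=-\zeta S_{13}$ with $\zeta\neq 0$, and $S_{13}$ cannot vanish on a subinterval, since that would force $v_3\equiv 0$ there, contradicting $\dot v_3=-g$.
\item Hence the set of times near which $\ddot x_3\equiv -g$ is exactly the union of the flat intervals, which recur with spacing $T/2$. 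Translation by $\tau$ preserves this set, so $\tau\in (T/2)\mathbb{Z}$, and $kT$ is a period for some $k\geq 1$.
\item At a time $t_0$ inside an arc, $v_3=\dot x_3$ and $S_{13}=-(\ddot x_3+g)/\zeta$ are read off from $x_3$, so $X(t_0+kT)=X(t_0)$.
\item But the time-$kT$ map starting at $t_0$ is the rotation by $\pm 2\pi k\eta$ about $Z(t_0)$. Therefore $e^{2\pi i k\eta}=1$ or $X\equiv Z$.
\end{itemize}
Relatedly, $R^k=I$ gives period $kT$, not $T$ as you wrote.

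What is actually provable, and what you should state and prove instead of deferring to the reference, is this:
\begin{itemize}
\item the height is always bounded;
\item it is $qT$-periodic when $\eta=p/q$;
\item for irrational $\eta$ it is periodic only along $Z$, and otherwise it is quasi-periodic.
\end{itemize}
The unconditional ``periodic'' in the statement goes beyond what the paper's cited argument delivers, which is boundedness.
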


  Let us see the effect that the moment of inertia parameter has on the height function $x_3(t)$. (Figure \ref{height_eta}.) For the  graphs of Figure \ref{height_eta} we chose
  $L=1, r=0.5$, $g=5$,  $v_1(0)=1$, $v_3(0)=-1$ (ball initially rolling from the right to the left vertical plate), $S_{13}=-0.5$. (These values are mostly arbitrary, but 
  the corresponding  height functions are representative of what is observed generally.)  The moment of inertia parameter $\eta$ must lie in the interval $[0,1)$. 
  For a ball of dimension $n$, if all the mass is uniformly distributed on a thin shell at the surface,  $\eta=\sqrt{2/(2+n)}$ which, in dimension $n=4$, is
  approximately $0.577$. Larger values still make sense   if we imagine   mass  distribution extending  beyond the distance $r$ from
  the center of the ball to the plate $P$ on which it rolls,  as in a yo-yo.

 Figure \ref{radius_limit} shows similar graphs  to those in Figure \ref{height_eta}, for the parameters  $L=1$,  $\eta=0.39$, $g=1$, $v_3(0)=-1$, $v_1(0)=1$, and decreasing  values of $r$. This is numerical evidence that the   height function stabilizes. The limit value as the radius tends to zero is the no-slip billiard limit.

 \subsection{Remarks about the circular cylinder}
 A detailed study of the nonholonomic billiard system on the circular cylinder in dimension $4$, which it is hoped will illuminate the observations made about the no-slip billiard system in the $3$-dimensional cylinder under gravity, will be undertaken in a future study. This is more subtle than the above system consisting of two parallel planes in that boundedness of orbits for the no-slip system is observed for certain initial conditions but not all. Due to the curvature of the boundary curve of 
 $\mathcal{C}$ (see the beginning of Section \ref{section_cylinder} for the definition), the rolling-around-the-edge dynamic cannot be expressed in an explicit analytic form as for the two parallel planes example. In fact, it is not easy to determine a priori for which velocities at a point on the boundary of $\mathcal{N}_r^c$, about to enter into
 this curved part will finally exit it into $\mathcal{N}_r^+$ or $\mathcal{N}_r^-$. (Rolling back into the same flat side from which it entered $\mathcal{N}^c_r$ is similar to
 the  ``friendly roll'' phenomenon of a basketball rolling around the rim  before falling in or out.) Figure \ref{time_distance_portrait} gives some idea of this dynamic.

 \begin{figure}[htbp]
\begin{center}
\includegraphics[width=4.0in]{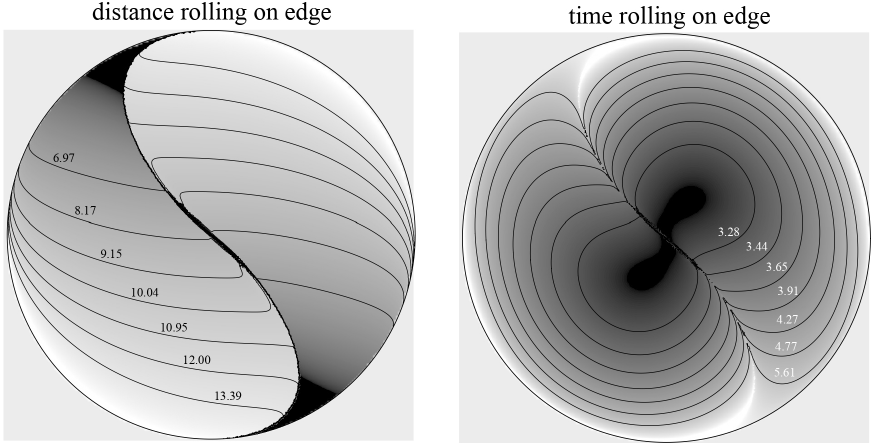}\ \ 
\caption{{\small  Time spent rolling on the circular edge (right) and distance from the initial point along the edge  before reentry in the flat part of $\mathcal{N}_r$, shown
as a function of the initial velocities. The vertical axis indicates the tangential spin variable $s$ (component of scaled angular velocity whose axis is perpendicular
to the $x_3$-slice of $\mathcal{N}_r$) at the moment of crossing from flat to curved part; the horizontal axis indicates the component of the velocity of the center of the ball tangential to the boundary of the flat part. The other component of the center velocity is obtained from this and $s$ from energy conservation. (Here we used $r=R$.)
}}
\label{time_distance_portrait}
\end{center}
\end{figure} 

In order to make sense of these images, recall that the linear and  velocities on an $x_3$-slice of $\mathcal{N}_r$ define a vector in a sphere of radius $\sqrt{2E_1/m}$
where $E_1$ is the constant kinetic energy of the cross-section subsystem. (See the remark following Proposition \ref{equations_4}.) This three-velocity at the moment 
of crossing the separation circle between flat and curved parts of the cross-section of $\mathcal{N}_r$ lies in a hemisphere whose pole points into the curved part.
The two discs of Figure \ref{time_distance_portrait}  are the flattening of this hemisphere. The functions represented by their level curves are, on the right, the time spent 
at the curved part before reentering the flat part and, on the left, the distance around the rim of $\mathcal{C}$ between the places of entry into $\mathcal{N}_r^c$ and exit from it.

  Despite these complications, we can see, numerically, that the nonholonomic billiard system indeed approximates, for small $r$,  the corresponding no-slip system.
 Figure \ref{circular_pancake} shows a
 $2$-dimensional slice of $\mathcal{N}_r$ and a segment of trajectory on it. (The dashed circular line shows the boundary between the flat and curved parts of the surface projected flat on the plane.) When the ball's radius tends to zero, the curve approximates the trajectory of a no-slip billiard system, with the characteristic double caustic.

   \begin{figure}[htbp]
\begin{center}
\includegraphics[width=2.5in]{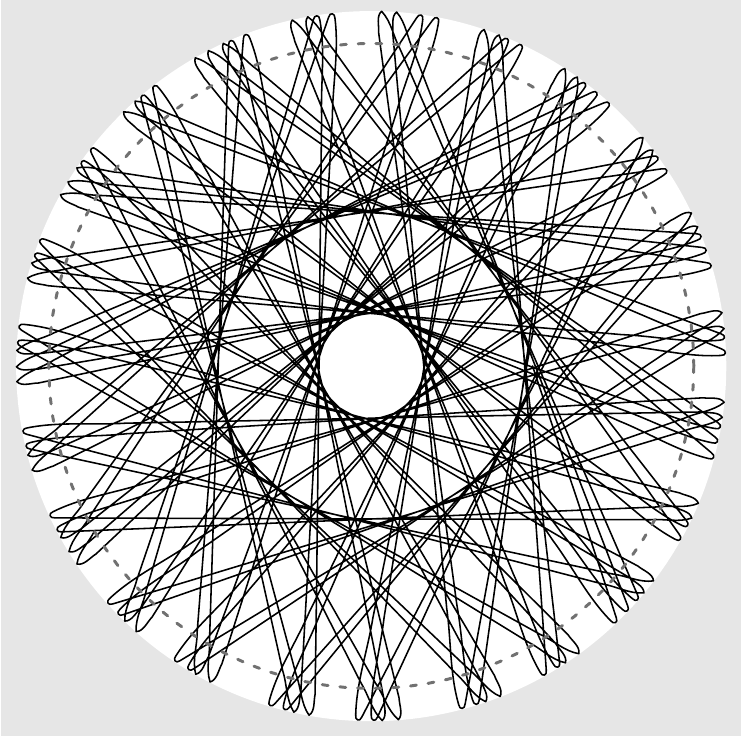}\ \ 
\caption{{\small A trajectory of the nonholonomic billiard ball of radius $r=0.1$ on a solid $3$-dimensional cylinder in $\mathbb{R}^4$, whose $2$-dimensional cross section
is a disc of radius $1$. Shown is 
(the two-dimensional projection of) a  $3$-dimensional $x_3$-slice of $\mathcal{N}_r$ (a circular pancake surface). The characteristic 
double-caustic of the no-slip billiard system in a disc is apparent, but instead of a sharp collision at the boundary we see the smooth rolling-around-the-corner reflection. 
}}
\label{circular_pancake}
\end{center}
\end{figure}

 As for the no-slip billiard system   of Figure \ref{cylinder}, the height function $x_3(t)$ of the nonholonomic billiard on the solid cylinder (with a $4$-dimensional ball)
 will typically accelerate downward in an oscillating fashion, but also   admits for certain initial conditions bounded motion, as Figure 
 \ref{circular_x3} seems to illustrate. (In that figure, the system's parameters are $R=1$, $r=0.1$, $g=1$, $v_{\text{\tiny initial}}=(-0.2, 1, 0, 0)$ for the top example,
$v_{\text{\tiny initial}}=(-2, 1, 0, 0)$ for the bottom example,    $S_{21}=-0.61$, $S_{31}=0$, $S_{32}=1$.) 
The condition of rolling first impact, which we know to be sufficient for bounded trajectories, does not have admit an obvious counterpart for the nonholonomic billiard system. However, the numerical experiments suggest that, in both cases, boundedness of trajectories is implied by the geometric feature that  the two caustic circles typically exhibited by the two-dimensional ($x_1x_2$) projection collapse to a single one.

   \begin{figure}[htbp]
\begin{center}
\includegraphics[width=4.5in]{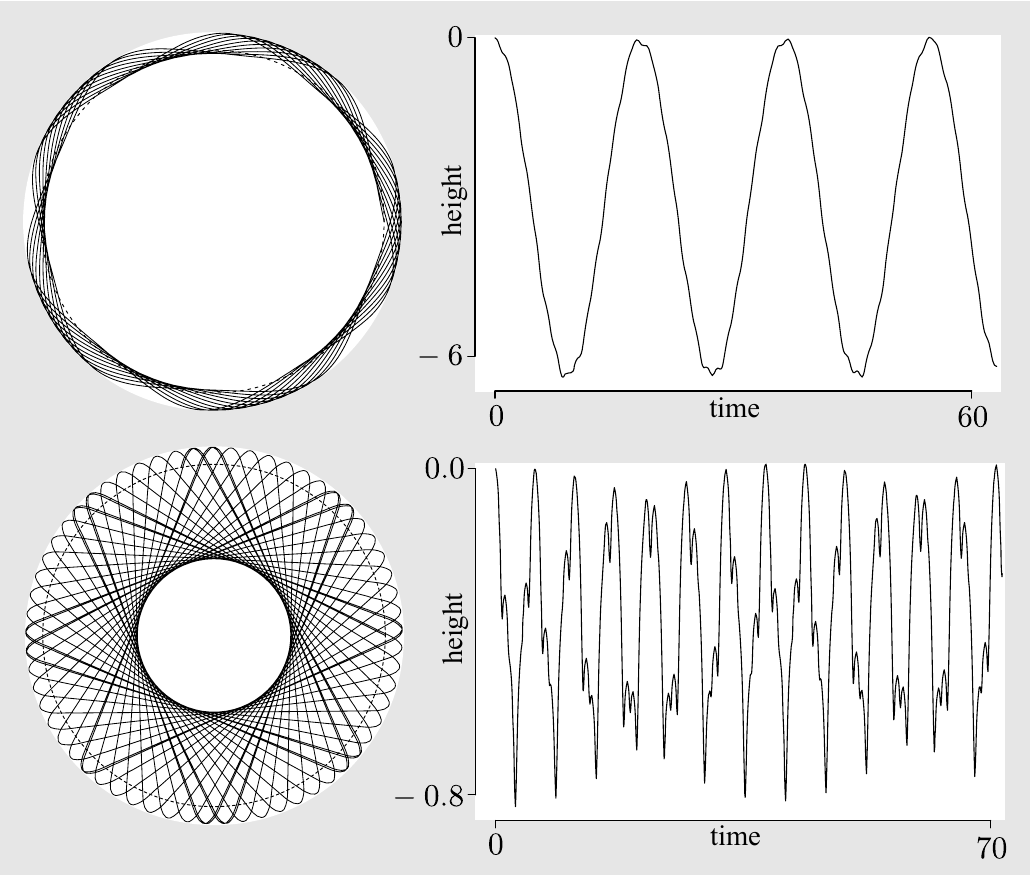}\ \ 
\caption{{\small  Segments of trajectory (top: with short intervals over the flat part of $\mathcal{N}_r$, i.e., skipping motion) for $r=0.1$. Initial conditions were chosen
to produce apparent bounded motion, similar to the behavior observed in Figure \ref{cylinder} under the rolling first impact assumption. However, like
for the no-slip billiard systems in a solid cylinder, the typical initial condition shows oscillation superimposed to  falling. We observe that
bounded motion appears in the numerical examples to coincide with the collapse of the two caustic circles into one.
}}
\label{circular_x3}
\end{center}
\end{figure} 

These remarks and numerical work go a certain distance in validating the proposal of this paper that
 the nonholonomic billiard system can help to illuminate the behavior of the corresponding no-slip billiard system in one dimension lower, but the analytic elaboration of this idea will require further study.

\end{document}